\newtheorem{theorem}{Theorem}[section]
\newtheorem{proposition}[theorem]{Proposition}
\newtheorem{lemma}[theorem]{Lemma}
\newtheorem{definition}[theorem]{Definition}
\newtheorem{example}[theorem]{Example}
\newtheorem{remark}[theorem]{Remark}
\newcommand\cref[1]{Corollary~\ref{cor:#1}}
\newcommand\sqr[2]{{\vbox{\hrule height.#2pt
    \hbox{\vrule width.#2pt height#1pt \kern#1pt
        \vrule width.#2pt}\hrule height.#2pt}}}
\renewcommand\qed{%
        \ifmmode\eqno\sqr53
        \else\nolinebreak\ \hfill\sqr53\medbreak\fi}
\numberwithin{equation}{section}
\newcommand{\Rrel}[1]{\overset{#1}{\sim}}
\newcommand{\re}{{\mathbb R}}
\newcommand{\cx}{{\mathbb C}}
\newcommand{\BMA}{{\mathbb A}}
\newcommand{\cR}{{\mathcal R}}
\newcommand{\cS}{{\mathcal S}}
\newcommand{\scafs}{{\mathsf{s}}}
\newcommand{\cU}{{\mathcal U}}
\newcommand{\cP}{{\mathcal P}}
\newcommand{\Mat}{\mathsf{Mat}}
\newcommand{\sS}{\mathsf{S}}
\newcommand{\tr}{\mathsf{tr}}
\DeclareMathOperator\spn{span}
\newtheorem*{rep@theorem}{\rep@title}
\newcommand{\newreptheorem}[2]{%
\newenvironment{rep#1}[1]{%
 \def\rep@title{#2 \ref{##1}}%
 \begin{rep@theorem}}%
 {\end{rep@theorem}}}
\newcommand{\inlineEG}[4]{
\begin{tikzpicture}[baseline=(base),black,node distance=0.5cm,  
rootnode/.style={draw, circle,  fill=red, inner sep=1.8pt},
hollownode/.style={  draw,  circle,  fill=white,  inner sep=1.8pt},
  every loop/.style={min distance=40pt,in=-30,out=90,looseness=20}]
\node[hollownode] (w) at (180:#4) {};
\node[hollownode] (ne) at (60:#4) {};
\node[hollownode] (se) at (300:#4) {};
 \path[-,thick] (se) edge node[left] {$\scriptscriptstyle{A_{#1}}$}   (ne);
 \path[-,thick] (w) edge node[above] {$\scriptscriptstyle{A_{#2}}$}   (ne);
 \path[-,thick] (w) edge node[below] {$\scriptscriptstyle{A_{#3}}$}   (se);
 \end{tikzpicture}}
\newcommand{\inlineBEG}[4]{
\begin{tikzpicture}[baseline=(base),black,node distance=0.5cm,  
rootnode/.style={draw, circle,  fill=red, inner sep=1.8pt},
hollownode/.style={  draw,  circle,  fill=white,  inner sep=1.8pt},
  every loop/.style={min distance=40pt,in=-30,out=90,looseness=20}]
\node[hollownode] (w) at (180:#4) {};
\node[hollownode] (ne) at (60:#4) {};
\node[hollownode] (se) at (300:#4) {};
 \path[-,thick] (se) edge node[left] {$\scriptscriptstyle{A_{#1}}$}   (ne);
 \path[-,thick] (w) edge node[above] {$\scriptscriptstyle{A_{#2}}$}   (ne);
 \path[-,thick] (w) edge node[below] {$\scriptscriptstyle{A_{#3}}$}   (se);
 \path[-,thick] (se) edge[bend right=10] node[right] {$\scriptscriptstyle{A^2}$}   (ne);
 \path[-,thick] (w) edge[bend left=50] node[above] {$\scriptscriptstyle{A^2}$}   (ne);
 \path[-,thick] (w) edge[bend right=50] node[below] {$\scriptscriptstyle{A^2}$}   (se); 
 \end{tikzpicture}}
 \newcommand{\inlineWhiskeredDelta}[3]{ 
\begin{tikzpicture}[baseline=(A1),black,node distance=0.5cm,  
solidvert/.style={draw, circle,  fill=red, inner sep=1.8pt},
hollowvert/.style={  draw,  circle,  fill=white,  inner sep=1.8pt},
  every loop/.style={min distance=40pt,in=-30,out=90,looseness=20}]
\def\sc {0.3}
\node[hollowvert] (A1) at (-1*\sc,0*\sc) {};
\node[hollowvert] (A2) at (2*\sc,0*\sc) {};
\node[hollowvert] (A3) at (0.5*\sc,1.5*1.732*\sc) {};
  \path[-] (A1)  edge [below] node  [] {\scriptsize #3} (A2);
  \path[-] (A2)  edge [right] node  [] {\scriptsize #1} (A3);
  \path[-] (A3)  edge [left] node   [] {\scriptsize #2} (A1);
\draw (A1) --  (-1.9*\sc,0*\sc);
\draw (A1) --  (-1.6*\sc,-0.5*\sc);
\draw (A1) --  (-1.3*\sc,-0.8*\sc);
\draw (A2) --  (2.9*\sc,0*\sc);
\draw (A2) --  (2.6*\sc,-0.5*\sc);
\draw (A2) --  (2.3*\sc,-0.8*\sc);
\draw (A3) --  (-0.2*\sc,3.4*\sc);
\draw (A3) --  (0.5*\sc,3.5*\sc);
\draw (A3) --  (1.2*\sc,3.4*\sc);
\end{tikzpicture}}
 \newcommand{\inlineWhiskeredWye}[3]{ 
 \begin{tikzpicture}[baseline=(B1),black,node distance=0.5cm,  
solidvert/.style={draw, circle,  fill=red, inner sep=1.8pt},
hollowvert/.style={  draw,  circle,  fill=white,  inner sep=1.8pt},
  every loop/.style={min distance=40pt,in=-30,out=90,looseness=20}]
\def\sc {0.3}
\node[hollowvert] (B1) at (-1*\sc,0*\sc) {};
\node[hollowvert] (B2) at (2*\sc,0*\sc) {};
\node[hollowvert] (B3) at (0.5*\sc,1.5*1.732*\sc) {};
\node[hollowvert] (B4) at (0.5*\sc,1.5*.732*\sc) {};
  \path[-] (B1)  edge [left] node  [pos=0.85]  {\scriptsize #2} (B4);
  \path[-] (B2)  edge [right] node  [pos=0.85] {\scriptsize #3} (B4);
  \path[-] (B3)  edge [right] node  [pos=0.5] {\scriptsize #1} (B4);
%
\draw (B1) --  (-1.9*\sc,0*\sc); \draw (B1) --  (-1.6*\sc,-0.5*\sc); \draw (B1) --  (-1.3*\sc,-0.8*\sc); 
\draw (B2) --  (2.9*\sc,0*\sc); \draw (B2) --  (2.6*\sc,-0.5*\sc); \draw (B2) --  (2.3*\sc,-0.8*\sc); 
\draw (B3) --  (-0.2*\sc,3.4*\sc); \draw (B3) --  (0.5*\sc,3.5*\sc); \draw (B3) --  (1.2*\sc,3.4*\sc); 
\end{tikzpicture}}
\begin{document}
\thispagestyle{empty}
\setcounter{page}{1}
\title{Quantum isomorphism of graphs from association schemes}
\author{
Ada Chan \\
Department of Mathematics and Statistics \\
York University, Toronto, Canada \\
{\tt ssachan@yorku.ca} \\
William J.~Martin \\
Department of Mathematical Sciences \\
Worcester Polytechnic Institute, 
Worcester, MA USA \\
{\tt  martin@wpi.edu}} 

\date{\today} 
\maketitle

\medskip

\begin{abstract}
We show that any two Hadamard graphs on the same number of vertices are quantum isomorphic. This follows from a more general recipe for showing quantum isomorphism of graphs arising from certain association schemes.
The main result is built from three tools. A remarkable recent result \cite{manrob2} of Man\v{c}inska and Roberson shows that graphs $G$ and $H$ are quantum isomorphic if and only if, for any planar graph $F$, the number of graph homomorphisms from $F$ to $G$ is equal to the number of graph homomorphisms from $F$ to $H$. A generalization of partition functions called ``scaffolds'' \cite{WJMscaff} affords some basic reduction rules such as series-parallel reduction and can be applied to counting homomorphisms. The final tool is the classical theorem of Epifanov showing that any plane graph can be reduced to a single vertex and no edges by extended series-parallel reductions and Delta-Wye transformations. This last sort of transformation is available to us in the case of exactly triply regular association schemes. The paper includes open problems and directions for future research.
\end{abstract}

\noindent {\bf Keywords:} association scheme, Hadamard graphs, homomorphism, quantum isomorphism, scaffolds, triple regularity

\noindent {\bf 2020 MSC Subject Codes:} 05E30, 20G42, 15A72, 16S50, 05C83, 05C90.

%
\section{Introduction}
\label{Sec:intro}

{
The pioneering work of Bell \cite{Bell} showing that no hidden variable theory  can fully explain quantum mechanical correlations of spatially separated entangled particles has been experimentally verified; related experimental work of Aspect, Clauser and Zeilinger earned the Nobel Prize in Physics in 2022. Over recent decades, Bell's ideas have been refined and generalized in many directions, with one application being the study of quantum games. In one of the most basic quantum games, two physically separated players who share both a strategy and a quantum state, but are otherwise unable to communicate, are fed classical questions from a referee and respond with classical answers.  Bell-type inequalities identify a gap between the win probability of the optimal classical strategy and the corresponding probability in this quantum version.  A particularly attractive class of quantum games are the graph homomorphism games. These games are intuitive, cast in the familiar language of computer science protocols. They generate new lines of investigation in graph theory; for example, the quantum chromatic number of a graph is sandwiched between the classical chromatic number and the vector chromatic number. And, perhaps surprisingly, the natural definition of the quantum analogue of the automorphism group of a graph happens to be a compact quantum group.
}

This notion of quantum isomorphism of graphs was introduced by Atserias~et al.\ \cite{manrob4} in their study of a non-local graph isomorphism game with two quantum players.  In that paper, the first known construction is given for non-isomorphic graphs that are quantum isomorphic. Further examples were discovered by S.\ Schmidt \cite{schmidt} using Godsil-McKay switching; Schmidt constructs a
family of pairs of strongly regular graphs on 120 vertices that are quantum isomorphic but pairwise non-isomorphic.
(Schmidt also lists several references proposing alternative approaches, but these have led to no new examples so far.) In this paper, we give a different strategy of finding quantum isomorphic but non-isomorphic graphs using association schemes and scaffolds. We show how this approach implies that any two Hadamard graphs on the same number of vertices are quantum isomorphic. The feature we exploit is exact triple regularity, a property 
closely tied to the study of spin models \cite{jaeger}.

Two graphs $G$ and $H$, with adjacency matrices $A_G$ and $A_H$ respectively, are isomorphic if and only if there exists a permutation matrix $P$ such that
$$
    P A_G= A_H P.
$$
Lov\'asz's classical result states that two graphs are isomorphic if and only if they have the same number of graph homomorphisms from any graph \cite{lovasz67}.

{
Let $\mathcal{A}$ be a $C^*$-algebra with unity $\mathbf{1}$ 
and let $\cU=(u_{ij})$  be an $n\times n$ matrix with 
entries in $\mathcal{A}$. We call $\cU$  a 
\emph{quantum permutation matrix} if it satisfies $u_{ij}^*=u_{ij}$ and
$\sum_{k=1}^n u_{ik} u_{jk}= \delta_{i,j} \mathbf{1} =
\sum_{k=1}^n u_{ki} u_{kj}$ for all $1\le i,j\le n$. See, for 
example, \cite{manrob2} for much more on quantum permutation matrices.
Two graphs, $G$ and $H$, are \emph{quantum isomorphic}  if and only if 
there exists a quantum permutation matrix $\cU$ of block size $d$ such that
$$
\cU A_G  =  A_H \cU
$$
where operations are performed in the $\cx$-algebra $\mathcal{A}$.
Note that when $\mathcal{A}=\cx$, $\cP$ is a permutation matrix and $G$ is isomorphic to $H$.}
See \cite[p323]{manrob4} for a pair of non-isomorphic graphs on $24$ vertices that are quantum isomorphic.  Only two families of examples are known of quantum isomorphic but non-isomorphic graphs; the graphs in the first family \cite{manrob4} are constructed based on a reduction from linear binary constraint system games to isomorphism games, and the graphs in the second family \cite{schmidt} are constructed via Godsil-McKay switching on two particular strongly regular graphs  with parameters $(120, 63, 30, 36)$.

Given graphs $F$ and $G$, we use $\hom(F,G)$ to denote the number of graph homomorphisms from $F$ to $G$.
Man\v{c}inska and Roberson give the following  remarkable characterization of quantum isomorphic graphs.
\begin{theorem}[\cite{manrob2}] 
\label{Thm:quantum}
Two graphs, $G$ and $H$, are quantum isomorphic if and only if 
$$\hom(F,G)=\hom(F,H),$$ 
for any planar graph $F$. 
\end{theorem}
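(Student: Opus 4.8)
\medskip
\noindent\emph{Proof strategy.} The plan is to prove the two implications separately; the forward one is short and essentially all the content is in the converse.

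\emph{From quantum isomorphism to equal planar homomorphism counts.} Suppose $\cU=(u_{ij})$ is a quantum permutation matrix over a $C^*$-algebra $\mathcal A$ with $\cU A_G=A_H\cU$. Model a planar bi-labeled graph as a graph drawn in a disk carrying an ordered tuple of $k$ ``input'' vertices on the bottom of the boundary and an ordered tuple of $\ell$ ``output'' vertices on top, and to such an $F$ attach the homomorphism matrix $M_F(G)\in\mathbb C^{V(G)^{\ell}\times V(G)^{k}}$ whose $(\mathbf j,\mathbf i)$ entry counts the homomorphisms $F\to G$ sending the inputs to $\mathbf i$ and the outputs to $\mathbf j$. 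Every planar bi-labeled graph is built from a short list of elementary tangles --- a single wire, a cup, a cap, an isolated vertex, a ``merge'' vertex, and a single edge --- using vertical stacking (which corresponds to matrix multiplication of the $M_F$) and horizontal juxtaposition (tensor product); no crossing is ever needed, precisely because $F$ is planar. For each generator one checks by hand that $\cU^{\otimes\ell}\,M_F(G)=M_F(H)\,\cU^{\otimes k}$: the purely combinatorial pieces reduce to the magic-unitary relations $\sum_k u_{ik}u_{jk}=\delta_{ij}\mathbf 1=\sum_k u_{ki}u_{kj}$, and the single edge is exactly the hypothesis $\cU A_G=A_H\cU$. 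Induction on a tangle decomposition of $F$ then yields $\cU^{\otimes\ell}M_F(G)=M_F(H)\cU^{\otimes k}$ for every planar $F$, and for closed $F$ (where $k=\ell=0$) this reads $\hom(F,G)\mathbf 1=\hom(F,H)\mathbf 1$ in $\mathcal A$, forcing equality of the integers.

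\emph{From equal planar homomorphism counts to quantum isomorphism.} For the converse I would set up the monoidal $\ast$-category $\mathcal P_G$ whose objects are natural numbers and whose morphisms $k\to\ell$ form the $\mathbb C$-span of the matrices $M_F(G)$ over all planar bi-labeled $F$ with $k$ inputs and $\ell$ outputs, with composition, tensor, and adjoint as in the disk picture. The crucial structural fact to establish is that this is exactly the intertwiner category of the quantum automorphism group $\mathrm{Qut}(G)$, the universal compact matrix quantum group carrying a magic unitary $u$ with $uA_G=A_Gu$; concretely,
$$\mathrm{Hom}_{\mathrm{Qut}(G)}(u^{\otimes k},u^{\otimes\ell})=\operatorname{span}\{M_F(G): F\text{ planar bi-labeled},\ k\text{ inputs},\ \ell\text{ outputs}\}.$$
The inclusion ``$\supseteq$'' is the computation above with $H=G$, $\cU=u$; the reverse inclusion amounts to saying that adjoining the single morphism $A_G$ to the noncrossing-partition category (Banica--Speicher's description of $\mathrm{Rep}(S_n^{+})$, $n=|V(G)|$) produces nothing beyond what planar bi-labeled graphs already realize. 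Granting this, the hypothesis $\hom(F,G)=\hom(F,H)$ for all planar $F$ does the rest: the Hilbert--Schmidt inner product $\langle M_{F_1}(G),M_{F_2}(G)\rangle$ equals $\hom(\widehat F,G)$ for the closed planar graph $\widehat F$ obtained by capping $F_1$ against $F_2$ in the disk, so the Gram matrices of the families $\{M_F(G)\}$ and $\{M_F(H)\}$ coincide; hence $M_F(G)\mapsto M_F(H)$ is a well-defined linear isometry, and it plainly commutes with composition, tensor, and adjoint and sends the fundamental object of $\mathrm{Qut}(G)$ to that of $\mathrm{Qut}(H)$. A Tannaka--Krein--Woronowicz reconstruction, applied in the appropriate bipartite form, upgrades this monoidal $\ast$-equivalence to a quantum permutation matrix $\cU$ intertwining the fundamental magic unitaries, i.e.\ with $\cU A_G=A_H\cU$.

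\emph{Where the difficulty lies.} The whole argument hinges on the displayed identity --- that planar bi-labeled graph homomorphism matrices are \emph{precisely} the intertwiner spaces of $\mathrm{Qut}(G)$. The ``$\supseteq$'' direction is bookkeeping, but ruling out extra intertwiners --- equivalently, showing that ``planar'' is exactly the combinatorial restriction matching the passage from all partitions ($\mathrm{Rep}(S_n)$, hence ordinary isomorphism and Lov\'asz's theorem) to noncrossing partitions ($\mathrm{Rep}(S_n^{+})$, hence quantum isomorphism) --- requires a genuine analysis of the category generated over the noncrossing partitions by $A_G$, together with an understanding of why a single crossing would collapse everything back to classical isomorphism. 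That planar/noncrossing dictionary, and the care needed to run it in the presence of the graph-defining relation $uA_G=A_Gu$, is the main obstacle; by comparison the two functoriality checks and the Tannaka--Krein step are comparatively formal.
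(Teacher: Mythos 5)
First, an important caveat: the paper does not prove this theorem at all. It is imported wholesale from \cite{manrob2} and used as a black box, so there is no ``paper's own proof'' to compare against. Judged on its own terms, your outline does faithfully reproduce the architecture of the Man\v{c}inska--Roberson argument: bi-labeled graphs and their homomorphism matrices, the intertwiner relation $\mathcal{U}^{\otimes \ell}M_F(G)=M_F(H)\,\mathcal{U}^{\otimes k}$ checked on generators for the easy direction, and, for the converse, the identification of the planar bi-labeled homomorphism matrices with the intertwiner spaces of the quantum automorphism group followed by a Tannaka--Krein reconstruction.

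As a proof, however, it has a genuine gap, which you yourself flag. Everything rests on the identity $\mathrm{Hom}_{\mathrm{Qut}(G)}(u^{\otimes k},u^{\otimes\ell})=\operatorname{span}\{M_F(G)\}$, and you establish neither inclusion in full. Even the ``easy'' inclusion requires the lemma that every planar bi-labeled graph is generated from your short list of elementary tangles by composition and tensor product without crossings; that is a real combinatorial statement about planar embeddings in a disk, not bookkeeping, and it is also what your forward direction needs when $k=\ell=0$. The reverse inclusion (no intertwiners beyond the planar ones) is the heart of the theorem and is left entirely open. Two further steps are asserted rather than proved: (i) that equality of the Gram matrices, via $\langle M_{F_1}(G),M_{F_2}(G)\rangle=\hom(\widehat{F},G)$ with $\widehat{F}$ planar and closed, makes $M_F(G)\mapsto M_F(H)$ a well-defined map that moreover respects composition, tensor product and adjoints (compatibility with the inner product alone does not give functoriality for free); and (ii) that the bipartite/bi-Galois form of Tannaka--Krein reconstruction actually outputs a quantum permutation matrix over a \emph{nonzero} $C^*$-algebra satisfying $\mathcal{U}A_G=A_H\mathcal{U}$ --- ruling out that the reconstructed algebra collapses to $0$ is a nontrivial point in \cite{manrob2}. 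So what you have is a correct roadmap of the Man\v{c}inska--Roberson proof, with the genuinely hard segments of the route still unbuilt.
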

They show further that it is undecidable to determine if two graphs are quantum isomorphic.  Equivalently, given any graphs $G$ and $H$, the problem of determining if there exists a planar $F$ such that $\hom(F,G)\neq \hom(F,H)$ is undecidable.  

For $\varphi:V(F) \rightarrow V(G)$, we have
$$
\prod_{\{a,b\} \in E(F)} (A_G)_{\varphi(a),\varphi(b)} = 
\begin{cases}
1 & \text{if $\varphi$ is a graph homomorphism from $F$ to $G$,}\\
0& \text{otherwise.}
\end{cases}
$$
Hence
$$
\hom(F,G)=\sum_{\varphi:V(F) \rightarrow V(G)} \prod_{\{a,b\} \in E(F)} (A_G)_{\varphi(a),\varphi(b)},
$$
which is the scaffold $\sS(F, \emptyset;w )$ on the graph $F$ with no root node and a weight function $w$ that maps every edge of $F$ to the matrix $A_G$.
Please see Section~\ref{Sec:schemes} for some background on scaffolds and association schemes.

In Section~\ref{Sec:homo}, we consider the case where $A_G$ belongs to the Bose-Mesner algebra of an exactly triply regular association scheme.   We apply Epifanov's theorem to express the scaffold $\sS(F, \emptyset;w )$ on any connected planar graph $F$ in terms of the Delta-Wye parameters of the association scheme.
This observation leads to our main result.

\newtheorem*{thm:main}{Theorem \ref{Thm:quantumisographs}}
\begin{thm:main}
Let $(X, \cR)$ and $(Y, \cS)$ be an exactly triply regular  symmetric association schemes that have the same Delta-Wye parameters (Definition \ref{Def:sameDeltaWye}).
Let $G'$ be a graph in $(Y, \cS)$ corresponding to\footnote{See Definition \ref{Def:corresponding} below.} $G$ in $(X,\cR)$.   Then $G$ and $G'$ are quantum isomorphic.
\end{thm:main}

We focus on Hadamard graphs in Section~\ref{Sec:Hadamard}.
In the construction of spin models from Hadamard graphs \cite{nomura}, Nomura computes the Delta-Wye parameters of the association scheme of a Hadamard graph and shows that these parameters depend only on the number of vertices.   
Hence, the association schemes of two Hadamard graphs of the same order have the same Delta-Wye parameters.   From \cite{jaeger} and \cite{nomura}, we see that the association scheme of a Hadamard matrix is exactly triply regular which leads to the following result, which has also been obtained independently by Gromada \cite{Gromada} via entirely different means.

\newtheorem*{thm:Had}{Theorem \ref{Thm:Hadamard}}
\begin{thm:Had}
Any two Hadamard graphs of the same order are quantum isomorphic.
\end{thm:Had}

To our knowledge, the Hadamard graphs of the same order ($\geq 64$) are the first examples of  three or more mutually quantum isomorphic but not isomorphic graphs.
Merchant proved \cite{Merchant} that, provided at least one Hadamard matrix of order $n$ exists, there are at least $2^{2n-16-6\log n}$
inequivalent Hadamard matrices of order $2n$. And McKay \cite{McKay1979} 
proved that, given two Hadamard matrices $H$ and $H'$, if $H'$ is inequivalent to both $H$ and $H^\top$, then $H$ and $H'$ produce non-isomorphic Hadamard graphs. The longstanding Hadamard conjecture, claiming that there exists a Hadamard matrix of order $4m$ for each positive integer $m$ would then imply that there are an exponential number (in $m$) non-isomorphic Hadamard
graphs on $32m$ vertices.

We discuss open problems and directions for future research in Section~\ref{Sec:problems}.

\section{Association schemes and scaffolds}
\label{Sec:schemes}

A symmetric association scheme is a partition of the edges of a complete graph into regular graphs whose adjacency matrices span a vector space closed and commutative under multiplication (a ``Bose-Mesner algebra''). The regularity imposed by the definition, and by extra assumptions such as triple regularity, facilitate counting of pairs, triples and $m$-tuples of vertices forming prescribed configurations. While the idea has been used informally in the community for decades, the concept of a ``scaffold'' was recently introduced in \cite{WJMscaff} to treat these counts of $m$-tuples algebraically so that linear combinations of $m$-vertex counts can be taken and change-of-basis can be locally applied on such configurations. In this section, we introduce these concepts, and set up notation and terminology that we will use as we work with exactly triply regular association schemes later.

\subsection{Definitions and our notation}
\label{Subsec:schemebasics}

A $d$-class \emph{symmetric association scheme} \cite{del,banito,bcn,godsil,DRGsurvey,mtsurvey} is an ordered pair $(X,\cR)$ where 
 $X$ is a nonempty finite set and  $\cR=\{R_0,\ldots,R_d\}$ is a partition of $X\times X$ into non-empty relations satisfying
 \begin{itemize}
 \item $R_0 = \{ (x,x) \mid x\in X\}$ is the identity relation;
 \item  for each $i$, $0 \le i\le d$, we have $R_i^\top=R_i$ where $R_i^\top = \{ (y,x) \mid (x,y)\in R_i \}$; 
 \item there exist \emph{intersection numbers} $p_{ij}^k$, $0\le i,j,k \le d$ satisfying 
 $$| \{ z\in X \mid (x,z) \in R_i, (z,y)\in R_j\}|=p_{ij}^k$$
 whenever $(x,y)\in R_k$.
\end{itemize} 
Note that, since all relations are symmetric, we have $p_{ij}^k = p_{ji}^k$ for all $i,j,k$; all symmetric association schemes are \emph{commutative}.

 For $x,y\in X$ and $0\le i\le d$, we write $x \Rrel{i} y$ to mean $(x,y)\in R_i$.
 
Denote by $\Mat_X(\cx)$ the algebra of all matrices with rows and columns indexed by set $X$ having complex entries. We define \emph{adjacency matrices} (or \emph{Schur idempotents}) of the association scheme,
$A_0,\ldots, A_d \in  \Mat_X(\cx)$ by 
$$(A_i)_{x,y} = \begin{cases} 1 &\text{if $x \Rrel{i} y$,}\\ 0 & \text{otherwise.}\end{cases}$$
These satisfy
 \begin{itemize}
 \item  $\sum_{i=0}^d A_i = J$, the all ones matrix 
 \item $A_i \circ A_j = \delta_{i,j} A_i$ where $\circ$ is the entrywise or \emph{Hadamard/Schur} product;
 \item $A_0 = I$;
 \item for each $i$, $0 \le i\le d$, $A_i^\top =A_i$;
 \item $\BMA= \spn_\re(\{A_0,\ldots,A_d\})$ is closed and commutative under matrix multiplication: there exist $p_{ij}^k$, $0\le i,j,k \le d$ satisfying 
 $A_i A_j = A_j A_i = \sum_{k=0}^d p_{ij}^k A_k$.
 \end{itemize}
 We call $\BMA$ the \emph{Bose-Mesner algebra} of the association scheme $(X,\cR)$.
 
Up to a choice of ordering of relations and ordering of vertices, the correspondence between 
Bose-Mesner algebras and association schemes is immediate and we often
work with the adjacency matrix $A_i$ in place of the graph $(X,R_i)$.

The vector space $\BMA=\spn_\re\{A_0,\ldots,A_d\}$ has a second basis of \emph{primitive (matrix) idempotents} $\{E_0,\ldots,E_d\}$: $\sum_{j=0}^d E_j=I$, $E_i E_j = \delta_{ij} E_i$, $E_i=E_i^\top$. 
The eigenvalues $P_{ji}$ and the dual eigenvalues $Q_{ji}$ of the association scheme satisfy
\begin{equation}
\label{Eqn:PQ} 
A_i = \sum_{j=0}^d P_{ji} E_j
\quad \text{and}\quad
E_i = \frac{1}{|X|} \sum_{j=0}^d Q_{ji} A_j.
\end{equation}
Since $\BMA$ is closed under entrywise multiplication, there exist \emph{Krein parameters} $q_{ij}^k$, $0\le i,j,k\le d$ satisfying $E_i \circ E_j = \frac{1}{|X|} \sum_{k=0}^d q_{ij}^k E_k$. (See \cite[Section 2.2-2.3]{bcn}.)

\subsection{Scaffolds}
\label{Subsec:scaffolds}

Let $(X,\cR)$ be an association scheme with Bose-Mesner algebra $\BMA$. These matrices act, in the obvious way, on the \emph{standard 
module} $V=\cx^X$ of all complex-valued functions on $X$ with standard basis of column vectors
$\{ \hat{x} \mid x\in X\}$.  This space is equipped with the corresponding positive definite Hermitian inner product 
$\langle v,w\rangle = v^\dagger w$ (where $\cdot^\dagger$ denotes conjugate transpose) 
satisfying $\langle \hat{x},\hat{y} \rangle = \delta_{x,y}$ for $x,y\in X$. We identify $V$ with its dual space 
$V^\dagger$ of linear functionals and  view matrices in $\Mat_X(\cx)$ as second order tensors. More generally, we will presently define
a scaffold with $m$ roots (or $m^{\rm th}$ order scaffold) as a certain type of tensor belonging  to 
$$  V^{\otimes m} = \underbrace{V \otimes V \otimes \cdots \otimes V}_{m} $$
with standard basis consisting of simple tensors of the form 
$\hat{x}_1 \otimes \hat{x}_2 \otimes \cdots \otimes \hat{x}_m$ where
$x_1,x_2,\ldots, x_m \in X$.

For a  graph $F=(V(F),E(F))$, an ordered set $R = \{r_1,\ldots,r_m\}$ of nodes in $F$ called \emph{roots}, and a function 
$w: E(F) \rightarrow \Mat_X(\cx)$, we define
\begin{equation}
\label{Eq:scaffshort}
\sS(F,R;w ) = \sum_{ \varphi:V(F) \rightarrow X} \quad \left( \prod_{\substack{ e\in E(F)  \\ e=\{a,b\} }} w(e)_{\varphi(a),\varphi(b)} \right) \widehat{\varphi(r_1)} \otimes \widehat{\varphi(r_2)} \otimes \cdots  \otimes \widehat{\varphi(r_m)}.
\end{equation}
We call $F$ the ``diagram'' of the scaffold $\sS(F,R;w)$, use red solid nodes  to depict the roots, and label each edge $e$ with the matrix $w(e)$.
 We identify the scaffold $\sS(F,R;w)$ with this pictorial representation of its data, being careful
to consistently order the roots by spacial placement when two scaffolds appear in the same equation. For instance, the  matrix
$A=[A_{xy}] \in \Mat_X(\cx)$, viewed as the second-order tensor $\sum_{x,y \in X} A_{xy} \ \hat{x} \otimes \hat{y}$, is denoted by  \inlineSDMatrix{M}{$A$}{1.5}. In this paper, all examples are symmetric matrices. So we will omit the directions on edges of the diagrams\footnote{In fact, our definition of a scaffold here is specialized to undirected graphs.}.

When studying an association scheme $(X,\cR)$ with adjacency matrices $A_i$, two families of third order scaffolds of fundamental importance \cite{terPQ,jaeger} are 
\begin{equation}
\label{Eqn:DeltaandWye}
 \inlineSDelta{D}{$A_i$,$A_j$,$A_k$}{1} = \hspace{-0.1in} \sum_{\substack{x,y,z\in X \\ x\Rrel{i}z, x\Rrel{j}y, y\Rrel{k}z}} 
 \hat{x} \otimes \hat{y} \otimes \hat{z} ~, \quad \inlineSWye{Y}{$A_i$,$A_j$,$A_k$}{1} =  \hspace{-0.1in} \sum_{\substack{x,y,z,u\in X \\ x\Rrel{i}u,y\Rrel{j}u,z\Rrel{k}u}}  
 \hat{x} \otimes \hat{y} \otimes \hat{z} ~ .
 \end{equation}
We next consider the vector space spanned
by all scaffolds with a given diagram  \cite[Section~3.2]{WJMscaff}. In particular, define
\begin{itemize}
\item  
\SWDelta{$\BMA$} = $\spn \left\{  \inlineSDelta{D}{$L$,$M$,$N$}{1} \middle| L,M,N \in \BMA \right\}$,
\item \SWWye{$\BMA$} = $\spn \left\{  \inlineSWye{D}{$L$,$M$,$N$}{1} \middle| L,M,N \in \BMA \right\}$.
\end{itemize}

As explained in our introduction, scaffolds help us count homomorphisms. 

\begin{lemma}
\label{Lem:homsarescaffolds}
Let  $F$ be a graph, let $G$ be a graph on vertex set $X$ with adjacency matrix $A$. Define
$w(e)=A$ for all edges $e$ of $F$. Then $\sS(F,\emptyset;w)= \mathsf{hom}(F,G)$, the number of 
graph homomorphisms from $F$ to $G$. \qed
\end{lemma}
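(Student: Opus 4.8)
The plan is to unwind Definition~\eqref{Eq:scaffshort} in the special case at hand and recognize the resulting sum as a count of homomorphisms; there is no real obstacle, only one convention worth stating explicitly. First I would observe that with $R=\emptyset$ we have $m=0$, so the scaffold $\sS(F,\emptyset;w)$ lives in $V^{\otimes 0}=\cx$ and in each summand of \eqref{Eq:scaffshort} the tensor factor $\widehat{\varphi(r_1)}\otimes\cdots\otimes\widehat{\varphi(r_m)}$ is the empty tensor product, i.e. the scalar $1$. Hence
$$
\sS(F,\emptyset;w)=\sum_{\varphi:V(F)\rightarrow X}\ \prod_{\substack{e\in E(F)\\ e=\{a,b\}}} w(e)_{\varphi(a),\varphi(b)},
$$
and substituting $w(e)=A$ on every edge yields $\sS(F,\emptyset;w)=\sum_{\varphi:V(F)\rightarrow X}\prod_{\{a,b\}\in E(F)}A_{\varphi(a),\varphi(b)}$.

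Next I would look at a single summand. Because $A$ is a $0/1$ matrix (the adjacency matrix of $G$), for a fixed $\varphi:V(F)\rightarrow X$ the product $\prod_{\{a,b\}\in E(F)}A_{\varphi(a),\varphi(b)}$ is a product of $0$'s and $1$'s, so it equals $1$ if $A_{\varphi(a),\varphi(b)}=1$ for every edge $\{a,b\}$ of $F$ and $0$ otherwise. The condition ``$A_{\varphi(a),\varphi(b)}=1$ for all $\{a,b\}\in E(F)$'' is precisely the statement that $\varphi$ carries each edge of $F$ to an edge of $G$, that is, that $\varphi$ is a graph homomorphism from $F$ to $G$; this is exactly the indicator identity displayed just before the lemma. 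Summing over all $\varphi$, each homomorphism contributes $1$ and every other map contributes $0$, so the total is the number of graph homomorphisms from $F$ to $G$, namely $\mathsf{hom}(F,G)$.

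The argument is thus pure bookkeeping, and the only step that deserves a word of care is the very first one: the reading of a rootless ($m=0$) scaffold as an element of $\cx$ via the empty tensor product. Once that is granted, the chain of equalities is immediate, and degenerate cases are absorbed automatically — for instance $E(F)=\emptyset$ gives $\sS(F,\emptyset;w)=|X|^{|V(F)|}=\mathsf{hom}(F,G)$, and a loop at a vertex of $F$ forces the corresponding factor $A_{\varphi(a),\varphi(a)}$, which vanishes for simple $G$, consistent with there being no homomorphism in that case.
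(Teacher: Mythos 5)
Your proof is correct and is essentially the argument the paper itself gives (in the introduction, just before Theorem~\ref{Thm:quantum}): the product over edges is the indicator that $\varphi$ is a homomorphism, and the rootless scaffold is exactly the sum of these indicators over all maps $V(F)\rightarrow X$. The only addition on your part is the explicit remark that the empty tensor product is read as the scalar $1$, which is a reasonable clarification but not a difference in approach.
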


We will perform local operations on scaffolds that preserve their value, using Proposition 1.5 in \cite{WJMscaff}. These include loop removal, removal of a non-root vertex of degree one, and series and parallel reduction. One may use the definition to directly verify the following
identities:
\begin{itemize} \label{List:seriespar}
    \item[{\sf sr}] If $M$ has constant row sum $\alpha$, then \inlineSpendant{$\scriptstyle{M}$}{1.0} $=\alpha$ \inlineRedDot
    \item[{\sf sr'}] If $M$ has constant diagonal $\alpha$, then \inlineSloop{$\ \scriptstyle{M}$} $=\alpha$ \inlineRedDot
    \item[{\sf sr1}] For $M,N\in \Mat_X(\cx)$, 
\begin{tikzpicture}[baseline=(A1),black,node distance=0.5cm,
solidvert/.style={draw, circle,  fill=red, inner sep=1.8pt},
hollowvert/.style={  draw,  circle,  fill=white,  inner sep=1.8pt}]
\node[solidvert] (A1) at (0,0) {};
\node[hollowvert] (A2) at (0.75,0) {};
\node[solidvert] (A3) at (1.5,0) {};
   \path[->] (A1)   [thick] edge node  [above] {$\scriptstyle{M}$} (A2);
   \path[->] (A2)   [thick] edge node  [above] {$\scriptstyle{N}$} (A3);
\node (B) at (1.9,0) {$=$};
\node[solidvert] (C1) at (2.3,0) {};
\node[solidvert] (C2) at (3.2,0) {};
   \path[->] (C1)   [thick] edge node  [above] {$\scriptstyle{MN}$} (C2);
\end{tikzpicture}
    \item[{\sf sr1'}] For $M,N\in \Mat_X(\cx)$, 
\begin{tikzpicture}[baseline=(A1),black,node distance=0.5cm,
solidvert/.style={draw, circle,  fill=red, inner sep=1.8pt},
hollowvert/.style={  draw,  circle,  fill=white,  inner sep=1.8pt}]
\node[solidvert] (A1) at (0,0) {};
\node[solidvert] (A2) at (1,0) {};
   \path[->] (A1)   [thick,bend left=45] edge node  [above] {$\scriptstyle{M}$} (A2);
   \path[->] (A1)   [thick, bend right] edge node  [above] {$\scriptstyle{N}$} (A2);
\node (B) at (1.4,0) {$=$};
\node[solidvert] (C1) at (1.8,0) {};
\node[solidvert] (C2) at (2.8,0) {};
   \path[->] (C1)   [thick] edge node  [above] {$\scriptstyle{M\circ N}$} (C2);
\end{tikzpicture}
\end{itemize}

\subsection{Extra regularity}
\label{Subsec:tr}

An association scheme $(X,\cR)$ with Bose-Mesner algebra $\BMA$ is \emph{triply regular} if, for all $x,y,z\in X$ and all 
$0\le i,j,k \le d$, $\upsilon(x,y,z):=\left| \left\{ \ u\in X \ : \  x\Rrel{i}u, y\Rrel{j} u, z\Rrel{k} u \ \right\}\right|$ depends only on 
$i,j,k$ and the three relations joining $x,y,z$ and not on the choice of $x,y,z$ themselves.  Jaeger proved  that $(X,\cR)$ is triply regular if and only if \SWWye{$\BMA$} $\subseteq$ \SWDelta{$\BMA$}   \cite[Proposition 7(ii)]{jaeger}. 
If we use $\upsilon_{r,s,t}^{i,j,k}$ to denote $\upsilon(x,y,z)$ 
when $x \Rrel{r} z$, $x \Rrel{s} y$ and $y \Rrel{t} z$, then
the scaffold equation
$$ \inlineSWye{W}{$A_i$,$A_j$,$A_k$}{1} = \sum_{r,s,t} \upsilon^{i,j,k}_{r,s,t}  \inlineSDelta{D}{$A_r$,$A_s$,$A_t$}{1}$$
holds for all $i, j, k$.

The association scheme $(X,\cR)$ is \emph{dually triply regular} if  \ \ \SWDelta{$\BMA$} \ $\subseteq$ \\ 
\SWWye{$\BMA$}  \cite[Proposition 8(ii)]{jaeger}
and \emph{exactly triply regular} if it is both triply regular and dually triply regular. 

\begin{theorem}[Terwilliger \cite{terwnotes2009} (see {\cite[Theorem~3.8]{WJMscaff}})] 
\label{Thm:Terw}
Let $(X,\cR)$ be a symmetric association scheme with minimal Schur idempotents $A_0,\ldots,A_d$, primitive (matrix) idempotents $E_0,\ldots,E_d$,
intersection numbers $p_{ij}^k$ and Krein parameters $q_{ij}^k$ ($0\le i,j,k\le d$). The set
$\left\{ \inlineSDelta{D}{$A_i$,$A_j$,$A_k$}{0.8} \middle| \ p_{ij}^k > 0 \ \right\}$ is an orthogonal basis for  \!\!\!\!\! \SWDelta{$\BMA$} and \\
$\left\{ \, \inlineSWye{D}{$E_i$,$E_j$,$E_k$}{0.8} \  \middle| \ q_{ij}^k > 0 \ \right\}$ is an orthogonal basis for  \SWWye{$\BMA$}.  \qed
\end{theorem}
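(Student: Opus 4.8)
To establish the stated theorem, the plan is to compute the Gram matrices of the two candidate families of scaffolds directly. The only genuine insight is a choice of basis: one should place the adjacency matrices (Schur idempotents) $A_0,\dots,A_d$ on the legs of the triangle scaffolds and the primitive (matrix) idempotents $E_0,\dots,E_d$ on the legs of the wye scaffolds, since these are the bases that diagonalize the respective Gram forms. Write $\Delta(L,M,N)$ and $\mathsf Y(L,M,N)$ for the triangle and wye scaffolds with leg-weights $L,M,N\in\BMA$; each is trilinear in $(L,M,N)$, so expanding arbitrary weights in the appropriate basis of $\BMA$ shows that $\{\Delta(A_i,A_j,A_k)\}_{0\le i,j,k\le d}$ spans \SWDelta{$\BMA$} and $\{\mathsf Y(E_i,E_j,E_k)\}_{0\le i,j,k\le d}$ spans \SWWye{$\BMA$}. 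It therefore suffices, in each case, to prove pairwise orthogonality of the members of this spanning family and to identify the nonzero ones: a pairwise orthogonal family of nonzero vectors spanning a space is an orthogonal basis, and --- since nonzero orthogonal vectors are distinct --- its members are in bijection with the triples having nonvanishing parameter.

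The triangle half is essentially bookkeeping. Every coordinate of $\Delta(A_i,A_j,A_k)$ in the standard basis of $V^{\otimes 3}$ is $0$ or $1$, and the simple tensor $\hat x\otimes\hat y\otimes\hat z$ has coordinate $1$ exactly when $x\Rrel{i}z$, $x\Rrel{j}y$, $y\Rrel{k}z$. Since $\cR$ partitions $X\times X$, each $\hat x\otimes\hat y\otimes\hat z$ contributes to exactly one member of the family, so the $(d+1)^3$ tensors $\Delta(A_i,A_j,A_k)$ have pairwise disjoint supports; hence they are pairwise orthogonal, and $\Delta(A_i,A_j,A_k)\ne 0$ precisely when some triple $(x,y,z)$ realizes those three relations --- that is, when the scheme contains a ``triangle'' with edge types $i,j,k$. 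For a symmetric scheme the positivity of an intersection number depends only on this unordered triple, so the nonzero triangle scaffolds are exactly those with $p_{ij}^k>0$. (Alternatively, a one-line computation using $A_i\circ A_{i'}=\delta_{ii'}A_i$ and $A_jA_k=\sum_m p_{jk}^m A_m$ yields $\langle\Delta(A_i,A_j,A_k),\Delta(A_{i'},A_{j'},A_{k'})\rangle=\delta_{ii'}\delta_{jj'}\delta_{kk'}\,|X|\,k_i\,p_{jk}^i$, with $k_i$ the valency of $R_i$.)

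The wye half carries the real content; the wye statement is not a formal consequence of the triangle one. Here I would expand the legs in the idempotent basis $E_0,\dots,E_d$ and compute $\langle\mathsf Y(E_i,E_j,E_k),\mathsf Y(E_{i'},E_{j'},E_{k'})\rangle$. Interchanging the sum over the root vertices $x,y,z$ with the sums over the two internal vertices, and using that each $E_i$ is real and symmetric with $E_iE_{i'}=\delta_{ii'}E_i$, the three legs decouple and one gets
\begin{align*}
\langle\mathsf Y(E_i,E_j,E_k),\mathsf Y(E_{i'},E_{j'},E_{k'})\rangle
&=\delta_{ii'}\delta_{jj'}\delta_{kk'}\sum_{u,v}(E_i)_{uv}(E_j)_{uv}(E_k)_{uv}\\
&=\delta_{ii'}\delta_{jj'}\delta_{kk'}\,\langle E_i\circ E_j,\,E_k\rangle
=\delta_{ii'}\delta_{jj'}\delta_{kk'}\,\frac{m_k}{|X|}\,q_{ij}^k,
\end{align*}
where the last equality uses the defining relation $E_i\circ E_j=\frac{1}{|X|}\sum_m q_{ij}^m E_m$ together with $\langle E_m,E_k\rangle=\delta_{mk}\,m_k$ (entrywise inner product), $m_k=\operatorname{rank}(E_k)>0$. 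As the inner product on $V^{\otimes 3}$ is positive definite, this single identity shows at once that the wye scaffolds are pairwise orthogonal, that $q_{ij}^k\ge 0$ (so the Krein condition is recovered en route), and that $\mathsf Y(E_i,E_j,E_k)=0$ exactly when $q_{ij}^k=0$. With the spanning already in hand, the nonzero wye scaffolds --- those with $q_{ij}^k>0$ --- form an orthogonal basis of \SWWye{$\BMA$}.

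The main obstacle is conceptual rather than computational: one has to realize that the matrix-idempotent basis is what diagonalizes the wye Gram form (the Schur-idempotent basis, natural on the triangle side, does not), and to recognize the residual triple sum $\sum_{u,v}(E_i)_{uv}(E_j)_{uv}(E_k)_{uv}$ as a multiple of a Krein parameter. Secondary points to handle carefully: the primitive idempotents of a symmetric scheme are real symmetric, so the Hermitian inner product on $V^{\otimes 3}$ coincides with the real bilinear form used above; and orthogonality together with non-vanishing forces distinct index triples to give distinct scaffolds, so that the two sets in the statement are genuinely parametrized by $\{(i,j,k):p_{ij}^k>0\}$ and $\{(i,j,k):q_{ij}^k>0\}$.
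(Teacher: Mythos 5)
The paper states this theorem without proof, citing Terwilliger's notes and \cite[Theorem~3.8]{WJMscaff}, and your argument is correct and is essentially the standard one from those sources: trilinearity gives spanning, disjoint supports give orthogonality and the non-vanishing criterion on the Delta side (using that positivity of $p_{ij}^k$ is permutation-invariant in a symmetric scheme), and the idempotent relations collapse the Wye Gram matrix to $\delta_{ii'}\delta_{jj'}\delta_{kk'}\,\tfrac{m_k}{|X|}q_{ij}^k$. The only slip is cosmetic: each wye has a single internal vertex, and the ``two internal vertices'' arise only when you pair two wyes in the inner product, which is indeed what your computation does.
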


Let us denote by $N_p$ the number of ordered triples $(i,j,k)$ with $p_{ij}^k>0$ and by $N_q$ the number of ordered triples $(i,j,k)$ with $q_{ij}^k>0$.
The following lemma follows from Jaeger's propositions.

\begin{lemma}
\label{Lem:exactlytrip}
    If $(X,\cR)$ is an exactly triply regular association scheme with Bose-Mesner algebra $\BMA$, then \SWDelta{$\BMA$} $=$ \SWWye{$\BMA$}.
    If $N_p=N_q$ and $(X,\cR)$ is either triply regular or dually triply regular, then $(X,\cR)$ is exactly triply regular. \qed
\end{lemma}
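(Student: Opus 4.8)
The plan is to dispatch the first assertion directly from the definitions and to reduce the second to a dimension count supplied by Theorem~\ref{Thm:Terw}. For the first assertion: if $(X,\cR)$ is exactly triply regular then, by definition, it is both triply regular and dually triply regular, so Jaeger's two propositions \cite[Propositions 7(ii) and 8(ii)]{jaeger} give the inclusions \SWWye{$\BMA$} $\subseteq$ \SWDelta{$\BMA$} and \SWDelta{$\BMA$} $\subseteq$ \SWWye{$\BMA$} simultaneously, hence \SWDelta{$\BMA$} $=$ \SWWye{$\BMA$}.

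For the second assertion the idea is to turn the combinatorial hypothesis $N_p = N_q$ into an equality of vector-space dimensions. By Theorem~\ref{Thm:Terw}, the scaffolds $\inlineSDelta{D}{$A_i$,$A_j$,$A_k$}{0.8}$ with $p_{ij}^k > 0$ form an orthogonal basis of \SWDelta{$\BMA$}; being an orthogonal family of nonzero vectors they are linearly independent, and in particular the triples $(i,j,k)$ indexing them are pairwise distinct, so $\dim$ \SWDelta{$\BMA$} $= N_p$. The same theorem, applied to the basis $\inlineSWye{D}{$E_i$,$E_j$,$E_k$}{0.8}$ with $q_{ij}^k > 0$, gives $\dim$ \SWWye{$\BMA$} $= N_q$. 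Hence $N_p = N_q$ forces $\dim$ \SWDelta{$\BMA$} $= \dim$ \SWWye{$\BMA$}.

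Now suppose $(X,\cR)$ is triply regular. Then Jaeger's Proposition 7(ii) gives \SWWye{$\BMA$} $\subseteq$ \SWDelta{$\BMA$}, and since both spaces are finite-dimensional of the same dimension this inclusion is an equality; in particular \SWDelta{$\BMA$} $\subseteq$ \SWWye{$\BMA$}, which is exactly dual triple regularity, so $(X,\cR)$ is exactly triply regular. The case in which $(X,\cR)$ is instead assumed dually triply regular is symmetric, exchanging the roles of the two scaffold spaces. I do not foresee a genuine obstacle here: the only point meriting care is that Theorem~\ref{Thm:Terw} yields the \emph{exact} dimension counts $\dim$ \SWDelta{$\BMA$} $= N_p$ and $\dim$ \SWWye{$\BMA$} $= N_q$, rather than mere inequalities, after which the argument is elementary linear algebra in a finite-dimensional space.
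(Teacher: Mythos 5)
Your proof is correct and is precisely the argument the paper intends: the paper states the lemma without proof, citing Jaeger's Propositions 7(ii) and 8(ii) for the two inclusions, and it introduces $N_p$ and $N_q$ immediately after Theorem~\ref{Thm:Terw} exactly so that the dimension count $\dim$ \SWDelta{$\BMA$} $= N_p$, $\dim$ \SWWye{$\BMA$} $= N_q$ upgrades a one-sided inclusion to equality. No gaps; your care about the orthogonal families being genuinely of size $N_p$ and $N_q$ is the only detail worth checking, and you handled it.
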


\begin{definition}
\label{Def:D-Ypar}
    Let $(X,\cR)$ be an exactly triply regular $d$-class association scheme with Bose-Mesner algebra having an ordered basis 
    $A_0,\ldots,A_d$ of adjacency matrices and an ordered basis $E_0,\ldots,E_d$ of primitive idempotents. The 
    \emph{Delta-Wye parameters} of $(X,\cR)$  are those $\left\{ \sigma^{i,j,k}_{r,s,t} \middle| p_{ij}^k>0, \ q_{rs}^t >0 \right\}$ and $\left\{ \tau_{i,j,k}^{r,s,t} \middle| p_{ij}^k>0, \ q_{rs}^t >0 \right\}$
    satisfying the equations
    \begin{equation}
    \label{Eqn:sigma}
    \inlineSDelta{D}{$\scriptstyle{A_i}$, $\scriptstyle{A_j}$, $\scriptstyle{A_k}$}{0.8} = \sum_{q_{rs}^t > 0} \sigma^{i,j,k}_{r,s,t} \inlineSWye{Y}{$\scriptstyle{E_r}$,$\scriptstyle{E_s}$,$\scriptstyle{E_t}$}{0.8}
    \end{equation}
    and
        \begin{equation}
    \label{Eqn:tau}
        \inlineSWye{Y}{$\scriptstyle{E_r}$,$\scriptstyle{E_s}$,$\scriptstyle{E_t}$}{0.8} = \sum_{p_{ij}^k > 0} \tau_{i,j,k}^{r,s,t}
        \inlineSDelta{D}{$\scriptstyle{A_i}$,$\scriptstyle{A_j}$,$\scriptstyle{A_k}$}{0.8} .
            \end{equation}
\end{definition}

\begin{remark}
    The two sets of coefficients are mutual inverses. So if one knows all $\sigma^{i,j,k}_{r,s,t}$, one may derive from these the parameters 
    $\tau_{i,j,k}^{r,s,t}$ and conversely.
\end{remark}

Applying (\ref{Eqn:PQ}) to Definition~\ref{Def:D-Ypar} gives the following equations
\begin{equation}
\label{Eqn:D-Y}
 \inlineSDelta{D}{$\scriptstyle{A_i}$, $\scriptstyle{A_j}$, $\scriptstyle{A_k}$}{0.8} = 
 \sum_{q_{ab}^c > 0} \sigma^{i,j,k}_{a,b,c} \inlineSWye{Y}{$\scriptstyle{E_a}$,$\scriptstyle{E_b}$,$\scriptstyle{E_c}$}{0.8} = 
 \frac{1}{|X|^3} \sum_{q_{ab}^c > 0} \sigma^{i,j,k}_{a, b, c }   \sum_{r,s,t} Q_{ra} Q_{sb}Q_{tc}  \inlineSWye{Y}{$\scriptstyle{A_r}$,$\scriptstyle{A_s}$,$\scriptstyle{A_t}$}{0.8}
\end{equation}
and
\begin{equation}
\label{Eqn:Y-D}
\inlineSWye{Y}{$\scriptstyle{A_i}$,$\scriptstyle{A_j}$,$\scriptstyle{A_k}$}{0.8}=
\sum_{a,b,c} P_{ai}P_{bj}P_{ck} \inlineSWye{Y}{$\scriptstyle{E_a}$,$\scriptstyle{E_b}$,$\scriptstyle{E_c}$}{0.8} = 
 \sum_{a,b,c} P_{ai}P_{bj}P_{ck} \sum_{p_{rs}^t > 0} \tau_{r,s,t}^{a,b,c} \inlineSDelta{D}{$\scriptstyle{A_r}$,$\scriptstyle{A_s}$,$\scriptstyle{A_t}$}{0.8} 
 \end{equation}
 which we will use in the proof of Theorem~\ref{Thm:EqualScaffolds}. Note that, while the expansion in (\ref{Eqn:Y-D}) is unique by Theorem \ref{Thm:Terw}, there are many solutions to the equation in (\ref{Eqn:D-Y}); it is important that we consistently use this one in our proof.

\begin{definition}
\label{Def:sameDeltaWye}
    Let $(X,\cR)$ and $(Y,\cS)$ be exactly triply regular $d$-class association schemes. We say $(X,\cR)$ and $(Y,\cS)$ \emph{have the same Delta-Wye parameters} if there exist orderings $A_0,A_1,\ldots,A_d$ and $A'_0,A'_1,\ldots,A'_d$ {of their respective adjacency matrices}, and there exist orderings $E_0,E_1,\ldots,E_d$ and $E'_0,E'_1,\ldots,E'_d$  {of  their respective primitive idempotents} such that every Delta-Wye parameter $\sigma_{r,s,t}^{i,j,k}$ for $(X,\cR)$ is equal to the corresponding 
    Delta-Wye parameter for $(Y,\cS)$.
\end{definition}

\begin{definition}
\label{Def:corresponding}
Let $(X,\cR)$  and $(Y,\cS)$  be  exactly triply regular $d$-class association schemes with the same Delta-Wye parameters with respect to  orderings $A_0,A_1,\ldots,A_d$ and $A'_0,A'_1,\ldots$, $A'_d$ of  {their respective adjacency matrices}, and orderings $E_0,E_1,\ldots,E_d$ and $E'_0,E'_1,\ldots,E'_d$ of their respective primitive idempotents. The bijection $A_i\mapsto A_i'$ extends
 linearly to a vector space isomorphism $\kappa: \BMA \rightarrow \BMA'$
carrying a matrix $M=\sum_{j=0}^d c_jA_j$ to the matrix $\kappa(M)=\sum_{j=0}^d c_jA'_j$ which we denote $M'$. (Note that $\kappa(E_j)=E_j'$ by Proposition \ref{Prop:same}.)  We call $M'$ the matrix in the Bose-Mesner algebra of $(Y,\cS)$ \emph{corresponding to} $M$. In
the special case where $M$ is the adjacency matrix of a graph $G$ on vertex set $X$, the matrix $M'$ is the adjacency matrix of some graph $G'$ on vertex set $Y$; we call $G'$ the graph in $(Y,\cS)$ \emph{corresponding to} $G$.
\end{definition}

We do not know whether two exactly triply regular association schemes with the same intersection numbers must have the same Delta-Wye parameters. In Section \ref{Sec:problems}, we ask if these parameters are functions of the $p_{ij}^k$'s.

\begin{proposition}
\label{Prop:same}
    If $(X,\cR)$ and $(Y,\cS)$ are exactly triply regular association schemes having the same Delta-Wye parameters, then they also have the same eigenvalues, dual eigenvalues, intersection numbers and Krein parameters under the appropriate consistent orderings of relations and primitive idempotents. The linear map $\kappa$ in Definition \ref{Def:corresponding} is a Bose-Mesner isomorphism: $\kappa(MN)=\kappa(M)\kappa(N)$ and $\kappa(M\circ N)=\kappa(M)\circ \kappa(N)$.
\end{proposition}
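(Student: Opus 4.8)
The plan is to read off every invariant in the statement directly from the defining identities (\ref{Eqn:sigma}) of the Delta--Wye parameters, by applying one simple operation --- ``capping a root'' --- to the scaffolds appearing there.

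\noindent\emph{The key computation.} Capping the $\ell$th root of a third-order scaffold means applying to $V^{\otimes 3}$ the linear map that contracts the $\ell$th tensor slot against the all-ones vector $\mathbf{1}$; by (\ref{Eq:scaffshort}) this sends $\sS(F,\{r_1,r_2,r_3\};w)$ to $\sS(F,\{r_1,r_3\};w)$, i.e.\ it demotes the root. Cap the \emph{second} root on both sides of (\ref{Eqn:sigma}). On the left, the demoted vertex now has degree two and lies on a path between the two surviving roots, parallel to the edge labelled $A_i$; series reduction {\sf sr1} followed by parallel reduction {\sf sr1'} collapses the left side to the single edge labelled $A_i\circ(A_jA_k)=\sum_\ell p_{jk}^{\ell}(A_i\circ A_\ell)=p_{jk}^{i}A_i$. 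On the right, in each Wye summand the demoted vertex becomes a pendant whose edge carries some $E_s$; since $E_s$ has constant row sum $\delta_{s,0}$, rule {\sf sr} removes it with a scalar $\delta_{s,0}$, and series reduction of the remaining degree-two centre leaves the edge $E_rE_t=\delta_{r,t}E_r$. Because $q_{r,0}^{r}=1>0$ for every $r$, no surviving term is lost, and (\ref{Eqn:sigma}) becomes
\begin{equation*}
p_{jk}^{i}\,A_i \;=\; \sum_{r=0}^d \sigma^{i,j,k}_{r,0,r}\,E_r .
\end{equation*}
Specialising to $j=0$, where $p_{0k}^{i}=\delta_{i,k}$, and then to $k=i$ gives $A_i=\sum_r\sigma^{i,0,i}_{r,0,r}E_r$; comparing with $A_i=\sum_r P_{ri}E_r$ from (\ref{Eqn:PQ}) and using the linear independence of the $E_r$ yields the clean formula
\begin{equation*}
P_{ri}\;=\;\sigma^{i,0,i}_{r,0,r}\qquad(0\le r,i\le d).
\end{equation*}
(The triples $(i,0,i)$ and $(r,0,r)$ are admissible in any scheme, since $p_{i,0}^{i}=q_{r,0}^{r}=1$, so these really are Delta--Wye parameters.)

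\noindent\emph{Propagating the consequences.} Since $(X,\cR)$ and $(Y,\cS)$ have the same Delta--Wye parameters, this formula forces $P=P'$ for their eigenvalue matrices in the chosen orderings, and hence $|X|=\sum_i P_{0i}=\sum_i P'_{0i}=|Y|$ (as $P_{0i}$ is the valency of $R_i$ and valencies sum to $|X|$). From the standard relation $PQ=|X|I$ and the invertibility of $P$ we obtain $Q=|X|P^{-1}=|Y|(P')^{-1}=Q'$, so the dual eigenvalues agree as well. The intersection numbers are determined by $P$ alone (the equations $P_{\ell i}P_{\ell j}=\sum_k p_{ij}^{k} P_{\ell k}$, $0\le\ell\le d$, form an invertible linear system for $(p_{ij}^{k})_k$), and dually the Krein parameters are determined by $Q$ alone; thus $p_{ij}^{k}=(p')_{ij}^{k}$ and $q_{ij}^{k}=(q')_{ij}^{k}$. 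Finally $\kappa(E_j)=\tfrac1{|X|}\sum_i Q_{ij}A'_i=\tfrac1{|Y|}\sum_i Q'_{ij}A'_i=E'_j$ by (\ref{Eqn:PQ}); and $\kappa(A_iA_j)=\kappa\bigl(\sum_k p_{ij}^{k}A_k\bigr)=\sum_k (p')_{ij}^{k}A'_k=A'_iA'_j$, while $\kappa(A_i\circ A_j)=\delta_{i,j}A'_i=A'_i\circ A'_j$ trivially, so extending bilinearly shows that $\kappa$ is a Bose--Mesner isomorphism.

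\noindent\emph{The main obstacle.} There is no deep difficulty once one hits on the idea of capping a root; what remains is bookkeeping, and two points deserve care. First, throughout (\ref{Eqn:sigma}) the $\ell$th root of the Delta scaffold must be matched consistently with the $\ell$th root of each Wye scaffold, so that capping acts coherently on both sides. Second, the reductions above use the normalisations $A_0=I$ and $E_0=\tfrac1{|X|}J$ --- the latter enters through ``$E_0$ is the unique primitive idempotent with nonzero row sums'' --- so one must check that the orderings supplied by Definition~\ref{Def:sameDeltaWye} place these at index $0$ in both schemes; this is the standard convention for association schemes and may be taken as part of the hypothesis, or verified directly from the characterisation just mentioned.
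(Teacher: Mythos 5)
Your proof is correct and takes essentially the same route as the paper: both arguments read the scheme parameters directly off Equation~(\ref{Eqn:sigma}) by converting roots to hollow nodes, the only difference being that you cap a single root to recover the eigenvalue matrix via $P_{ri}=\sigma^{i,0,i}_{r,0,r}$, whereas the paper caps all three roots to recover the intersection numbers via $p_{ij}^k\,p_{kk}^0=\sigma^{i,j,k}_{0,0,0}$ and then notes the remaining parameters are determined by these. The verification that $\kappa$ is a Bose--Mesner isomorphism is identical in both.
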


\begin{proof}
   The reader may sum coefficients (convert root nodes to hollow nodes preserving equality) in Equation (\ref{Eqn:sigma}) to find $p_{kk}^0 = \sigma_{0,0,0}^{k,k,0}$ and $p_{ij}^k p_{kk}^0 = \sigma_{0,0,0}^{i,j,k}$ and the remaining association scheme parameters are recoverable from the intersection numbers.
   
   Since $\kappa(A_i)=A'_i$ is a 01-matrix and both $A_i \circ A_j=\delta_{ij} A_i$ and $A'_i \circ A'_j = \delta_{ij} A'_i$ hold for this pair of bases, we have $\kappa(M\circ N)=\kappa(M)\circ \kappa(N)$ by linearity. Finally, 
   $$ \kappa( A_i A_j ) = \kappa\left( \sum_{k=0}^d p_{ij}^k A_k \right) = \sum_{k=0}^d p_{ij}^k A'_k = A'_i A'_j = \kappa(A_i)\kappa(A_j). \qed $$
\end{proof}
\section{Homomorphism counting}
\label{Sec:homo}
Given a planar graph $F$ and a graph $G$, let $w_G$ be the weight function mapping each edge of $F$ to the adjacency matrix $A_G$.
We compute the scaffold $\sS(F, \emptyset; w_G)$ to count the number of graph homomorphisms from $F$ to $G$ using Epifanov's Theorem on plane graphs.

\subsection{Epifanov's Theorem}
\label{Subsec:Epifanov}

A \emph{plane graph} \cite[p83]{diestel} is an embedding of a planar graph and we do not differentiate between embeddings equivalent under ambient isotopy.
We allow the following local operations on plane graphs. Each modifies an embedded graph only within a closed disk with the understanding that this disk contains no part of the embedding other than what is shown.

\begin{center}
\def\arraystretch{1.5}
\begin{tabular}{lcll}
\textsf{loop}:  &
\begin{tikzpicture}[baseline=(A1),black,node distance=0.5cm,  
solidvert/.style={draw, circle,  fill=red, inner sep=1.8pt},
hollowvert/.style={  draw,  circle,  fill=white,  inner sep=1.8pt},
  every loop/.style={min distance=40pt,in=-30,out=90,looseness=20}]
\node[hollowvert] (A1) at (0,0) {};
\node (goesto)  at (1.5,0) {$\rightsquigarrow$};
\node[hollowvert] (B1) at (2.5,0) {};
\path (A1) edge[loop,in=45,out=-45,looseness=20] node {}  (A1);
\draw (A1) -- (-0.2,0.15); \draw (A1) -- (-0.3,0);  \draw (A1) -- (-0.2,-0.15);
\draw (B1) -- (2.3,0.15); \draw (B1) -- (2.2,0);  \draw (B1) -- (2.3,-0.15);
\end{tikzpicture}& deletion of a loop \\
\textsf{pendent}:   &
\begin{tikzpicture}[baseline=(A1),black,node distance=0.5cm,  
solidvert/.style={draw, circle,  fill=red, inner sep=1.8pt},
hollowvert/.style={  draw,  circle,  fill=white,  inner sep=1.8pt},
  every loop/.style={min distance=40pt,in=-30,out=90,looseness=20}]
\node[hollowvert] (A1) at (0,0) {}; 
\node[hollowvert] (A2) at (1,0) {}; 
\node (goesto)  at (2,0) {$\rightsquigarrow$};
\node[hollowvert] (B1) at (3,0) {};
\path (A1)  edge node {}  (A2);
\draw (A1) -- (-0.2,0.15); \draw (A1) -- (-0.3,0);  \draw (A1) -- (-0.2,-0.15);
\draw (B1) -- (2.8,0.15); \draw (B1) -- (2.7,0);  \draw (B1) -- (2.8,-0.15);
\end{tikzpicture} & deletion of a vertex of degree one and pendent edge\\
\textsf{series}:  & 
\begin{tikzpicture}[baseline=(A1),black,node distance=0.5cm,  
solidvert/.style={draw, circle,  fill=red, inner sep=1.8pt},
hollowvert/.style={  draw,  circle,  fill=white,  inner sep=1.8pt},
  every loop/.style={min distance=40pt,in=-30,out=90,looseness=20}]
\node[hollowvert] (A1) at (-1,0) {};
\node[hollowvert] (A2) at (0,0) {};
\node[hollowvert] (A3) at (1,0) {};
\node (goesto)  at (2,0) {$\rightsquigarrow$};
\node[hollowvert] (B1) at (3,0) {};
\node[hollowvert] (B2) at (4,0) {};
    \path[-] (A1) edge node {} (A2);     \path[-] (A2) edge node {} (A3);
        \path[-] (B1) edge node {} (B2);
\draw (A1) -- (-1.2,0.15); \draw (A1) -- (-1.3,0);  \draw (A1) -- (-1.2,-0.15);
\draw (A3) -- (1.2,0.15); \draw (A3) -- (1.3,0);  \draw (A3) -- (1.2,-0.15);
\draw (B1) -- (2.8,0.15); \draw (B1) -- (2.7,0);  \draw (B1) -- (2.8,-0.15);
\draw (B2) -- (4.2,0.15); \draw (B2) -- (4.3,0);  \draw (B2) -- (4.2,-0.15);
\end{tikzpicture}& a series reduction\\
\textsf{parallel}:   &
\begin{tikzpicture}[baseline=(A1),black,node distance=0.5cm,  
solidvert/.style={draw, circle,  fill=red, inner sep=1.8pt},
hollowvert/.style={  draw,  circle,  fill=white,  inner sep=1.8pt},
  every loop/.style={min distance=40pt,in=-30,out=90,looseness=20}]
\node[hollowvert] (A1) at (0,0) {};
\node[hollowvert] (A2) at (1,0) {};
\node (goesto)  at (2,0) {$\rightsquigarrow$};
\node[hollowvert] (B1) at (3,0) {};
\node[hollowvert] (B2) at (4,0) {};
    \path[-,bend left] (A1) edge node {} (A2);    \path[-,bend right] (A1) edge node {} (A2);     
        \path[-] (B1) edge node {} (B2);
\draw (A1) -- (-0.2,0.15); \draw (A1) -- (-0.3,0);  \draw (A1) -- (-0.2,-0.15);
\draw (A2) -- (1.2,0.15); \draw (A2) -- (1.3,0);  \draw (A2) -- (1.2,-0.15);
\draw (B1) -- (2.8,0.15); \draw (B1) -- (2.7,0);  \draw (B1) -- (2.8,-0.15);
\draw (B2) -- (4.2,0.15); \draw (B2) -- (4.3,0);  \draw (B2) -- (4.2,-0.15);
\end{tikzpicture} & a parallel reduction\\
\textsf{Delta}:&  
\begin{tikzpicture}[baseline=(A1),black,node distance=0.5cm,  
solidvert/.style={draw, circle,  fill=red, inner sep=1.8pt},
hollowvert/.style={  draw,  circle,  fill=white,  inner sep=1.8pt},
  every loop/.style={min distance=40pt,in=-30,out=90,looseness=20}]
\def\sc {0.3}
\node[hollowvert] (A1) at (-1*\sc,0*\sc) {};
\node[hollowvert] (A2) at (2*\sc,0*\sc) {};
\node[hollowvert] (A3) at (0.5*\sc,1.5*1.732*\sc) {};
\def\xsh {2.5}
\node[hollowvert] (B1) at (\xsh-1*\sc,0*\sc) {};
\node[hollowvert] (B2) at (\xsh+2*\sc,0*\sc) {};
\node[hollowvert] (B3) at (\xsh+0.5*\sc,1.5*1.732*\sc) {};
\node[hollowvert] (B4) at (\xsh+0.5*\sc,1.5*.732*\sc) {};
  \path[-] (A1)  edge node  [] {} (A2);
  \path[-] (A2)  edge node  [] {} (A3);
  \path[-] (A3)  edge node  [] {} (A1);
  \path[-] (B1)  edge node  [] {} (B4);
  \path[-] (B2)  edge node  [] {} (B4);
  \path[-] (B3)  edge node  [] {} (B4);
\node (goesto) at (\xsh-1.2,0.75*\sc) {$\rightsquigarrow$};
\draw (A1) --  (-1.9*\sc,0*\sc); \draw (A1) --  (-1.6*\sc,-0.5*\sc); \draw (A1) --  (-1.3*\sc,-0.8*\sc); 
\draw (A2) --  (2.9*\sc,0*\sc); \draw (A2) --  (2.6*\sc,-0.5*\sc); \draw (A2) --  (2.3*\sc,-0.8*\sc); 
\draw (A3) --  (-0.2*\sc,3.4*\sc); \draw (A3) --  (0.5*\sc,3.5*\sc); \draw (A3) --  (1.2*\sc,3.4*\sc); 
\draw (B1) --  (\xsh-1.9*\sc,0*\sc); \draw (B1) --  (\xsh-1.6*\sc,-0.5*\sc); \draw (B1) --  (\xsh-1.3*\sc,-0.8*\sc); 
\draw (B2) --  (\xsh+2.9*\sc,0*\sc); \draw (B2) --  (\xsh+2.6*\sc,-0.5*\sc); \draw (B2) --  (\xsh+2.3*\sc,-0.8*\sc); 
\draw (B3) --  (\xsh-0.2*\sc,3.4*\sc); \draw (B3) --  (\xsh+0.5*\sc,3.5*\sc); \draw (B3) --  (\xsh+1.2*\sc,3.4*\sc); 
\end{tikzpicture} & a Delta-Wye transformation\\
\textsf{Wye}: & 
\begin{tikzpicture}[baseline=(A1),black,node distance=0.5cm,  
solidvert/.style={draw, circle,  fill=red, inner sep=1.8pt},
hollowvert/.style={  draw,  circle,  fill=white,  inner sep=1.8pt},
  every loop/.style={min distance=40pt,in=-30,out=90,looseness=20}]
\def\sc {0.3}
\node[hollowvert] (A1) at (-1*\sc,0*\sc) {};
\node[hollowvert] (A2) at (2*\sc,0*\sc) {};
\node[hollowvert] (A3) at (0.5*\sc,1.5*1.732*\sc) {};
\node[hollowvert] (A4) at (0.5*\sc,1.5*.732*\sc) {};
\def\xsh {2.5}
\node[hollowvert] (B1) at (\xsh-1*\sc,0*\sc) {};
\node[hollowvert] (B2) at (\xsh+2*\sc,0*\sc) {};
\node[hollowvert] (B3) at (\xsh+0.5*\sc,1.5*1.732*\sc) {};
  \path[-] (A1)  edge node  [] {} (A4);
  \path[-] (A2)  edge node  [] {} (A4);
  \path[-] (A3)  edge node  [] {} (A4);
  \path[-] (B1)  edge node  [] {} (B2);
  \path[-] (B2)  edge node  [] {} (B3);
  \path[-] (B3)  edge node  [] {} (B1);
\node (goesto) at (\xsh-1.2,0.75*\sc) {$\rightsquigarrow$};
\draw (A1) --  (-1.9*\sc,0*\sc); \draw (A1) --  (-1.6*\sc,-0.5*\sc); \draw (A1) --  (-1.3*\sc,-0.8*\sc); 
\draw (A2) --  (2.9*\sc,0*\sc); \draw (A2) --  (2.6*\sc,-0.5*\sc); \draw (A2) --  (2.3*\sc,-0.8*\sc); 
\draw (A3) --  (-0.2*\sc,3.4*\sc); \draw (A3) --  (0.5*\sc,3.5*\sc); \draw (A3) --  (1.2*\sc,3.4*\sc); 
\draw (B1) --  (\xsh-1.9*\sc,0*\sc); \draw (B1) --  (\xsh-1.6*\sc,-0.5*\sc); \draw (B1) --  (\xsh-1.3*\sc,-0.8*\sc); 
\draw (B2) --  (\xsh+2.9*\sc,0*\sc); \draw (B2) --  (\xsh+2.6*\sc,-0.5*\sc); \draw (B2) --  (\xsh+2.3*\sc,-0.8*\sc); 
\draw (B3) --  (\xsh-0.2*\sc,3.4*\sc); \draw (B3) --  (\xsh+0.5*\sc,3.5*\sc); \draw (B3) --  (\xsh+1.2*\sc,3.4*\sc); 
\end{tikzpicture} & a Wye-Delta transformation\\
\end{tabular}
\end{center}

\begin{theorem}[Epifanov (see {\cite[Proposition 5]{jaeger}})]
\label{Thm:Epifanov}
Let $F$ be any connected plane graph. Then there exists a sequence of plane graphs $F_0,F_1,\ldots,F_{\ell}$ with the following properties.
\begin{itemize}
\item[(i)] $F_0=F$ and $F_{\ell}$ is a graph with one vertex and no edges
\item[(ii)] up to ambient isotopy, $F_{h+1}$ is obtained from $F_h$ by just one of the above local transformations (\textsf{loop}, \textsf{pendent}, \textsf{series}, \textsf{parallel}, \textsf{Delta}, or \textsf{Wye}), for $0\le h< \ell$.
\qed
\end{itemize}
\end{theorem}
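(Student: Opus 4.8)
The plan is to prove, by induction on the number of edges, the slightly stronger two-terminal form (the version established by Feo and Provan): if $F$ is a connected plane graph with two distinguished vertices $s\ne t$ incident with a common face, then $F$ can be reduced --- by the six moves, applied so that $s$ and $t$ are never deleted --- to the single edge $st$. Epifanov's theorem then follows by taking $s,t$ on the outer face of the given graph (possible once it has an edge), reducing to one edge, and applying one \textsf{pendent} move to delete $t$; the edgeless case is trivial, and the base case $|E(F)|=1$ forces $F$ to be exactly the edge $st$. For the inductive step I would first clear away the ``cheap'' configurations: a loop, a pair of parallel edges, a degree-one vertex in $V(F)\setminus\{s,t\}$, or a degree-two vertex in $V(F)\setminus\{s,t\}$ can be removed by \textsf{loop}, \textsf{parallel}, \textsf{pendent} or \textsf{series} to give a connected plane graph $F_1$ with fewer edges and the same terminals, and prepending $F\rightsquigarrow F_1$ to the sequence supplied by induction for $F_1$ settles this case. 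So we may assume $F$ is simple with every non-terminal vertex of degree at least $3$.

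Next I would dispose of low connectivity in the usual way: a cut vertex or a $2$-separation of $F$ splits it into strictly smaller pieces meeting only at the separator, and one reduces each piece separately --- using the separator vertices (together with $s$ or $t$, when they lie in that piece) as its terminals --- and then reassembles, collapsing any parallel edges that result; this is routine block/tree-decomposition bookkeeping. Hence we may assume $F$ is $3$-connected, so every vertex has degree $\ge 3$, and Euler's formula then forces a face of size $3$, $4$ or $5$: if every face had size $\ge 6$, then with $\phi$ the number of faces we would have $2|E(F)|\ge 6\phi$, and with $|V(F)|-|E(F)|+\phi=2$ and $2|E(F)|\ge 3|V(F)|$ this gives $2|E(F)|\ge 6\phi\ge 12+2|E(F)|$, impossible; and simplicity rules out faces of size $1$ or $2$.

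The heart of the argument --- and the step I expect to be the main obstacle --- is to show that such a $3$-connected plane graph with a smallest face of size $g\in\{3,4,5\}$ admits a single \textsf{Delta} or \textsf{Wye} move after which a cascade of \textsf{loop}, \textsf{pendent}, \textsf{series} and \textsf{parallel} moves strictly decreases the edge count, which closes the induction. The favorable subcase is immediate: if $g=3$ and the triangular face carries a vertex $x$ of degree $3$, then \textsf{Delta} on that triangle lowers $\deg x$ to $2$, and the ensuing \textsf{series} reduction deletes an edge --- dually, \textsf{Wye} at a degree-$3$ vertex on a triangle creates a parallel pair to collapse. The trouble is that no such configuration need be present: the octahedron is $3$-connected, $4$-regular and triangulated. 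So one must descend through the faces of size $4$ and $5$ and through the degree patterns on and immediately around a minimal face, at each stage exhibiting a move (frequently one of several candidates, chosen according to which nearby edges coincide) that triggers a reducing cascade --- for the octahedron, a first \textsf{Delta} on one triangular face creates degree-$3$ vertices on adjacent triangular faces and the favorable subcase then applies. Organizing all of this into a finite case analysis on $g$ and on the local degree multiset, using $3$-connectivity to exclude the degenerate coincidences, is the intricate combinatorial core of Epifanov's theorem (and of the Feo--Provan and Truemper proofs). I would also keep in view the medial-graph reformulation, in which the six moves become Reidemeister-type (curl, bigon, triangle) moves on the $4$-regular plane graph $M(F)$ and the theorem becomes the assertion that any connected $4$-regular plane graph reduces to a simple closed curve --- a repackaging that, while conceptually pleasing, does not lighten the case analysis. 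Since the result is classical, in this paper we simply invoke it via \cite[Proposition 5]{jaeger}.
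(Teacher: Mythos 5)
The paper does not actually prove this theorem: it is stated with a terminal \qed and attributed to Epifanov via \cite[Proposition 5]{jaeger}, so the ``official'' proof is a citation. Your closing sentence does exactly the same thing, and to that extent your treatment matches the paper's. Your preliminary sketch of the Feo--Provan/Truemper-style argument is also correct in outline: the two-terminal strengthening, the clearing of loops, parallel pairs, and non-terminal vertices of degree at most $2$, the block/2-separation decomposition, and the Euler-formula count showing a $3$-connected simple plane graph with minimum degree $3$ has a face of size $3$, $4$, or $5$ (your inequality $2|E(F)|\ge 6\phi\ge 12+2|E(F)|$ checks out) are all standard and sound, as is your octahedron example showing that the favorable subcase need not be present at the outset.

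However, read as a proof rather than as a citation, the proposal has a genuine gap, and it is precisely the one you flag: the claim that every $3$-connected simple plane graph with all degrees at least $3$ admits a single \textsf{Delta} or \textsf{Wye} move after which a cascade of the four series-parallel moves strictly decreases the edge count is asserted, not established. This claim is not a technical loose end --- it is the entire content of Epifanov's theorem, since everything before it (series-parallel reduction and connectivity decomposition) was already classical. The known proofs discharge it either by a delicate case analysis on the degree pattern around a minimal face, or by a global potential-function argument on the medial graph; neither is short, and ``organizing all of this into a finite case analysis'' cannot be waved through. There are also smaller elisions you should not leave implicit if you intend a self-contained proof: the $2$-separation reassembly needs care when $s$ and $t$ lie in different pieces, and one must verify that the terminals of each piece are co-facial in that piece so the inductive hypothesis applies. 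Since the paper itself only cites the result, the defensible course is to drop the sketch or present it explicitly as background, and rest the statement on the reference, as the paper does.
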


\subsection{A technical theorem and the main result}
\label{Subsec:main}

In this section, we assume  $(X,\cR)$ and $(Y, \cS)$ are exactly triply regular association schemes having the same Delta-Wye parameters with respect to  orderings $A_0,\ldots,A_d$ and $A'_0,\ldots,A'_d$ of  {their respective adjacency matrices}, and orderings $E_0,E_1,\ldots,E_d$ and $E'_0,E'_1,\ldots,E'_d$ of their respective primitive idempotents.
We use $\BMA$ and $\BMA'$ to denote the Bose-Mesner algebras of $(X,\cR)$ and $(Y, \cS)$, respectively. To 
avoid confusion, we denote the composition of functions using the symbol $\bullet$.

\begin{theorem}
\label{Thm:EqualScaffolds}
Let $(X,\cR)$ and $(Y, \cS)$ be exactly triply regular association schemes having the same Delta-Wye parameters
and let $\kappa: \BMA \rightarrow \BMA'$ be as given in 
Proposition \ref{Prop:same}.
Let $F$ be a connected plane graph (possibly with loops and multiple edges) and consider any edge weights $w: E(F) \rightarrow \BMA$. Then $\sS(F, \emptyset; w)=\sS(F,\emptyset; \kappa \bullet w)$. 
\end{theorem}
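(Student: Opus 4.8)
The plan is to run Epifanov's Theorem (Theorem~\ref{Thm:Epifanov}) on $F$ and show that the two scaffolds $\sS(F,\emptyset;w)$ and $\sS(F,\emptyset;\kappa\bullet w)$ are carried along the reduction in lockstep. Fix once and for all a sequence $F=F_0,F_1,\dots,F_\ell$ as in Theorem~\ref{Thm:Epifanov}, so that $F_\ell$ has a single vertex and no edges. I would prove, by induction on $h$, the following invariant: there exist a finite index set $I_h$, scalars $(\lambda_\alpha)_{\alpha\in I_h}$ in $\cx$, and weight functions $w^{(\alpha)}_h\colon E(F_h)\to\BMA$ for $\alpha\in I_h$ such that
\[
\sS(F,\emptyset;w)=\sum_{\alpha\in I_h}\lambda_\alpha\,\sS\bigl(F_h,\emptyset;w^{(\alpha)}_h\bigr)
\quad\text{and}\quad
\sS(F,\emptyset;\kappa\bullet w)=\sum_{\alpha\in I_h}\lambda_\alpha\,\sS\bigl(F_h,\emptyset;\kappa\bullet w^{(\alpha)}_h\bigr),
\]
with the \emph{same} index set and the \emph{same} scalars in both equations. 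The base case $h=0$ is trivial. At the endpoint $h=\ell$, the diagram $F_\ell$ has no edges, so $\sS(F_\ell,\emptyset;w^{(\alpha)}_\ell)=|X|$ for every $\alpha$ (the empty product summed over all maps from the single vertex of $F_\ell$ to $X$), and likewise $\sS(F_\ell,\emptyset;\kappa\bullet w^{(\alpha)}_\ell)=|Y|$; hence $\sS(F,\emptyset;w)=\bigl(\sum_\alpha\lambda_\alpha\bigr)|X|$ and $\sS(F,\emptyset;\kappa\bullet w)=\bigl(\sum_\alpha\lambda_\alpha\bigr)|Y|$. Since $(X,\cR)$ and $(Y,\cS)$ have the same intersection numbers (Proposition~\ref{Prop:same}), they have the same valencies, so $|X|=|Y|$ and the theorem follows.

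For the inductive step I would treat the six local moves one at a time, applying the appropriate scaffold identity or substitution to each summand $\sS(F_h,\emptyset;w^{(\alpha)}_h)$ and using throughout (i) the fact that every edge weight produced again lies in $\BMA$ (resp.\ $\BMA'$) and (ii) the locality principle for scaffolds (Proposition~1.5 of \cite{WJMscaff}), which permits replacing a subscaffold by an equal one inside a larger diagram. For a \textsf{loop} carrying weight $M=\sum_i c_i A_i$, identity {\sf sr'} deletes it at the price of the constant diagonal of $M$, namely $c_0$; since $\kappa(M)=\sum_i c_i A'_i$ has the same diagonal constant, the identical scalar appears on the $Y$-side. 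For a \textsf{pendent} with weight $M$, identity {\sf sr} contributes the constant row sum $\sum_i c_i k_i$, which is unchanged when $M$ is replaced by $\kappa(M)$ because the two schemes have the same valencies $k_i$. A \textsf{series} reduction replaces two edges of weights $M,N$ meeting at a degree-two non-root vertex by one edge of weight $MN$ (identity {\sf sr1}), and a \textsf{parallel} reduction replaces two parallel edges of weights $M,N$ by one of weight $M\circ N$ (identity {\sf sr1'}); since $\kappa$ is a Bose--Mesner isomorphism (Proposition~\ref{Prop:same}), $\kappa(MN)=\kappa(M)\kappa(N)$ and $\kappa(M\circ N)=\kappa(M)\circ\kappa(N)$, so on the $Y$-side the new weight function is again the $\kappa$-image of the one on the $X$-side. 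In all four cases the invariant is preserved, with any scalar factor introduced being the same on both sides.

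The substantive moves are \textsf{Delta} and \textsf{Wye}, which I expect to be the only real obstacle. For a \textsf{Delta}-to-\textsf{Wye} move, the triangle of $F_h$ being replaced --- with edge weights $L_1,L_2,L_3\in\BMA$ --- contributes a three-rooted triangle subscaffold; by multilinearity this is a linear combination of the Delta scaffolds \inlineSDelta{D}{$A_i$,$A_j$,$A_k$}{1}, and by Equation~\eqref{Eqn:D-Y} it equals a linear combination of Wye scaffolds \inlineSWye{Y}{$A_r$,$A_s$,$A_t$}{1} whose coefficients $\nu_{r,s,t}$ are built solely from the $\BMA$-coordinates of $L_1,L_2,L_3$, the Delta--Wye parameters $\sigma$, the dual eigenvalues $Q$, and $|X|$. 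Substituting this combination back into the diagram replaces the triangle by a star at a new internal vertex, i.e.\ produces precisely $F_{h+1}$, and turns $\lambda_\alpha\,\sS(F_h,\emptyset;w^{(\alpha)}_h)$ into $\sum_{r,s,t}\lambda_\alpha\nu_{r,s,t}\,\sS(F_{h+1},\emptyset;w^{(\alpha,r,s,t)})$, where $w^{(\alpha,r,s,t)}$ agrees with $w^{(\alpha)}_h$ off the triangle and assigns $A_r,A_s,A_t$ to the three new legs. On the $Y$-side the same computation, carried out with $\kappa(L_1),\kappa(L_2),\kappa(L_3)$ and the $(Y,\cS)$-instance of \eqref{Eqn:D-Y}, yields \emph{literally} the same coefficients $\nu_{r,s,t}$, because the two schemes share the $\sigma$'s (hypothesis), the $Q$'s, and the common order $|X|=|Y|$ (Proposition~\ref{Prop:same}), while the new legs receive $A'_r=\kappa(A_r)$, etc.; so the invariant is maintained. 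The \textsf{Wye}-to-\textsf{Delta} move is symmetric, using \eqref{Eqn:Y-D}, the eigenvalues $P$, and the parameters $\tau$. Two points here require care, and constitute the heart of the argument: exact triple regularity is exactly what guarantees \SWDelta{$\BMA$} $=$ \SWWye{$\BMA$} (Lemma~\ref{Lem:exactlytrip}), so that a triangle subscaffold can be rewritten in terms of stars at all; and one must use the particular, non-unique expansion~\eqref{Eqn:D-Y} consistently throughout (as flagged in the text following that equation), so that the coefficients $\nu_{r,s,t}$ really are the same functions of the shared scheme parameters on the two sides.
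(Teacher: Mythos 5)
Your proposal is correct and follows essentially the same route as the paper's proof: induction along an Epifanov reduction sequence, maintaining identical scalar coefficients on the $X$- and $Y$-sides through each of the six local moves, with Proposition~\ref{Prop:same} handling the series/parallel/loop/pendent cases and the shared Delta--Wye parameters (used via the consistently chosen expansions \eqref{Eqn:D-Y} and \eqref{Eqn:Y-D}) handling the \textsf{Delta} and \textsf{Wye} moves. The only cosmetic difference is that you allow weight functions valued in all of $\BMA$ and invoke multilinearity at the Delta/Wye steps, whereas the paper normalizes to basis-valued weights $w_{h,m}\colon E(F_h)\to\{A_0,\dots,A_d\}$ from the outset.
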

\begin{proof}
Write $w'=\kappa \bullet w$ so that $w':E(F) \rightarrow \BMA'$.
First choose an embedding and view $F$ as a plane graph.  
Let $F_0, F_1, \ldots, F_{\ell}$ be a sequence of plane graphs satisfying the conditions in Theorem~\ref{Thm:Epifanov}.

 Since $\BMA=\spn\{A_0, A_1, \ldots, A_d\}$ and $\BMA'=\spn\{A'_0, A'_1, \ldots, A'_d\}$,
 it is sufficient to show inductively that, for $h=0,1,\ldots,\ell$, there exist $m_h$, $\alpha_{h,m}$ and weight functions  
 $$w_{h,m}: E(F_h) \rightarrow \{A_0, A_1, \ldots, A_d\} $$
 such that
 \begin{equation}
 \label{Eqn:Induction}
\sS(F, \emptyset; w) = \sum_{m=1}^{m_h} \alpha_{h,m} \ \sS(F_h, \emptyset; w_{h,m})
\quad  \text{and}\quad
 \sS(F, \emptyset; w') = \sum_{m=1}^{m_h} \alpha_{h,m} \ \sS(F_h, \emptyset; \kappa \bullet w_{h,m}) ~ .
 \end{equation}
 Note that, since $F_\ell$ has no edges, $ \sS(F_\ell, \emptyset; \omega)=|X|$ for $\omega: \emptyset \rightarrow \Mat_X(\cx)$ and  the result follows: 
 $$ \sS(F, \emptyset; w) = |X| \sum_{m=1}^{m_\ell} \alpha_{\ell,m} = |Y| \sum_{m=1}^{m_\ell} \alpha_{\ell,m} =  \sS(F, \emptyset; w') . $$

 When $h=0$, Equation~\ref{Eqn:Induction} follows because $w(e) \in \BMA$, for each $e\in E(F)$, and $w'=\kappa\bullet w$.
 For $h>0$, we show Equation~\ref{Eqn:Induction} holds  for appropriate $\alpha_{h,m}$ and $w_{h,m}$ by considering each type of local transformation occurring in Theorem~\ref{Thm:Epifanov} and its effect on the associated scaffolds.  We include the edge weights in the following scaffolds to highlight the change of the weight functions due to the local transformations.

\medskip
\noindent
\textsf{loop}: $F_{h+1}$ is obtained from $F_h$ by deleting a loop $e$.  Set $m_{h+1}=m_h$,
consider $1\le m\le m_h$, and suppose $w_{h,m}(e)=A_r$. Define $w_{h+1,m}: E(F_{h+1}) \rightarrow \BMA$ to be the restriction 
of $w_{h,m}$ to $E(F_h)\backslash \{e\}$ so that 
\begin{equation}
\label{Eqn:loop}
\begin{tikzpicture}[baseline=(A1),black,node distance=0.5cm,  
solidvert/.style={draw, circle,  fill=red, inner sep=1.8pt},
hollowvert/.style={  draw,  circle,  fill=white,  inner sep=1.8pt},
  every loop/.style={min distance=40pt,in=-30,out=90,looseness=20}]


%
\node[hollowvert] (A1) at (0,0) {};
\path (A1) edge[loop,in=45,out=-45,looseness=20, right] node [midway] {\scriptsize $A_r$}  (A1);
\draw (A1) -- (-0.2,0.15); \draw (A1) -- (-0.3,0);  \draw (A1) -- (-0.2,-0.15);
\draw (-1,0) node[anchor=west]{$\sS( \hspace{1.75cm}, \emptyset; w_{h,m}) =$};
\draw (3.2,0) node[anchor=west]{$\delta_{0,r}\  \sS( \hspace{0.75cm}, \emptyset; w_{h+1,m}),$};
\node[hollowvert] (B1) at (4.95,0) {};
\draw (B1) -- (4.75,0.15); \draw (B1) -- (4.75,0);  \draw (B1) -- (4.75,-0.15);

%

%

\end{tikzpicture}
\end{equation}
by Rule \textsf{sr'} in Section \ref{Subsec:scaffolds} above
and the same equation holds after replacing $w_{h,m}$ and $w_{h+1,m}$ with $\kappa \bullet w_{h,m}$ and $\kappa \bullet w_{h+1,m}$ respectively. Summing over $m$ with coefficients $\alpha_{h+1,m} = \delta_{0,r} \alpha_{h,m}$ yields Equations (\ref{Eqn:Induction}) with $h$ replaced by $h+1$.

\medskip
\noindent
\textsf{pendent}: 
$F_{h+1}$ is obtained from $F_h$ by deletion of a degree one vertex and the sole incident edge $e$. Let $m_{h+1}=m_h$ and, for each $1\le m\le m_h$, 
setting $A_r=w_{h,m}(e)$, define $w_{h+1,m}: E(F_{h+1}) \rightarrow \BMA$ 
to be the restriction  of $w_{h,m}$ to $E(F_h)\backslash \{e\}$. Then we have 
\begin{equation}
\label{Eqn:leaf}
\begin{tikzpicture}[baseline=(A1),black,node distance=0.5cm,  
solidvert/.style={draw, circle,  fill=red, inner sep=1.8pt},
hollowvert/.style={  draw,  circle,  fill=white,  inner sep=1.8pt},
  every loop/.style={min distance=40pt,in=-30,out=90,looseness=20}]
\node[hollowvert] (A1) at (0,0) {}; 
\node[hollowvert] (A2) at (1,0) {}; 
\path (A1)  edge [above] node [] {\scriptsize $A_r$}  (A2);
\draw (A1) -- (-0.2,0.15); \draw (A1) -- (-0.3,0);  \draw (A1) -- (-0.2,-0.15);
\draw (-1,0) node[anchor=west]{$\sS( \hspace{1.75cm}, \emptyset; w_{h,m}) =$};
\def\xsh {0.2}
\node[hollowvert] (B1) at (5+\xsh,0) {};
\draw (3+\xsh,0) node[anchor=west]{$p_{r,r}^0 \  \sS( \hspace{1cm}, \emptyset; w_{h+1,m})$};
\draw (B1) -- (4.8+\xsh,0.15); \draw (B1) -- (4.7+\xsh,0);  \draw (B1) -- (4.8+\xsh,-0.15);
\end{tikzpicture}
\end{equation}
by Rule \textsf{sr} in Section \ref{Subsec:scaffolds} above. Since $A'_r=\kappa(A_r)$ has the same row sum as $A_r$, the same equation holds when $w_{h,m}$ and $w_{h+1,m}$ are replaced by $\kappa \bullet w_{h,m}$ and $\kappa \bullet w_{h+1,m}$, respectively. Choosing coefficients
$\alpha_{h+1,m} = p_{r,r}^0 \alpha_{h,m}$ for $1\le m\le m_h$ and summing gives the induction step in this case.

\medskip
\noindent
\textsf{series}: %
$F_{h+1}$ is obtained from $F_h$ by  contraction of an edge $e_1$ in series with edge $e_2$ (their common endpoint being incident to 
no other edges). We have $E(F_{h+1})=E(F_h) \backslash \{e_1\}$. Let $m_{h+1}= (d+1) m_h$. Re-indexing to keep things simple define, for $1\le m\le m_h$ and $0\le t\le d$, 
$$
w_{h+1,m}^t(e)=
\begin{cases}
A_t & \text{if $e=e_2$,}\\
w_{h,m}(e) & \text{otherwise}
\end{cases}
$$
and $\alpha_{h+1,m}^t = p_{rs}^t \alpha_{h,m}$. Then, applying Rule \textsf{sr1},  we have the equation
\begin{equation}
\label{Eqn:series}
\begin{tikzpicture}[baseline=(A1),black,node distance=0.5cm,  
solidvert/.style={draw, circle,  fill=red, inner sep=1.8pt},
hollowvert/.style={  draw,  circle,  fill=white,  inner sep=1.8pt},
  every loop/.style={min distance=40pt,in=-30,out=90,looseness=20}]
\node[hollowvert] (A1) at (-1,0) {};
\node[hollowvert] (A2) at (0,0) {};
\node[hollowvert] (A3) at (1,0) {};
\path[-] (A1) edge  [above] node [] {\scriptsize $A_r$} (A2);     
\path[-] (A2) edge [above] node [] {\scriptsize $A_s$} (A3);
\draw (A1) -- (-1.2,0.15); \draw (A1) -- (-1.3,0);  \draw (A1) -- (-1.2,-0.15);
\draw (A3) -- (1.2,0.15); \draw (A3) -- (1.3,0);  \draw (A3) -- (1.2,-0.15);
\draw (-2,0) node[anchor=west]{$\sS( \hspace{3cm}, \emptyset; w_{h,j}) =$};
\def\xsh {0.2}
\draw (3.25+\xsh,0) node[anchor=west]{$\displaystyle \sum_{t=0}^d p_{r,s}^t \  \sS( \hspace{2cm}, \emptyset; w_{h+1,j}^t).$};
\node[hollowvert] (B1) at (5.75+\xsh,0) {};
\node[hollowvert] (B2) at (6.75+\xsh,0) {};
\path[-] (B1) edge [above] node [] {\scriptsize $A_t$} (B2);
\draw (B1) -- (5.55+\xsh,0.15); \draw (B1) -- (5.45+\xsh,0);  \draw (B1) -- (5.55+\xsh,-0.15);
\draw (B2) -- (6.95+\xsh,0.15); \draw (B2) -- (7.05+\xsh,0);  \draw (B2) -- (6.95+\xsh,-0.15);
\end{tikzpicture}
\end{equation}
Summing over $1\le m\le m_h$ gives us
$$ \sum_{m=1}^{m_h} \alpha_{h,m} \ \sS(F_h, \emptyset; w_{h,m}) = 
\sum_{m=1}^{m_h} \sum_{t=0}^d  \alpha_{h+1,m}^t \ \sS(F_{h+1}, \emptyset; w_{h+1,m}^t) $$
and similarly for edge weights in $\BMA'$:
$$ \sum_{m=1}^{m_h} \alpha_{h,m} \ \sS(F_h, \emptyset; \kappa \bullet w_{h,m}) = 
\sum_{m=1}^{m_h} \sum_{t=0}^d  \alpha_{h+1,m}^t \ \sS(F_{h+1}, \emptyset; \kappa \bullet w_{h+1,m}^t) $$
using Proposition \ref{Prop:same}.

\medskip
\noindent
\textsf{parallel}: 
$F_{h+1}$ is obtained from $F_h$ by  deletion of an edge $e_1$ which is in parallel to edge $e_2$. 
 Let $m_{h+1}= m_h$ and, for each $1\le m\le m_h$,  define 
 $w_{h+1,m}: E(F_{h+1}) \rightarrow \BMA$  to be the restriction 
of $w_{h,m}$ to $E(F_h)\backslash \{e_1\}$ and, assuming $w_{h,m}(e_1)=A_r$ and $w_{h,m}(e_2)=A_s$, set $\alpha_{h+1,m} = \delta_{r,s} \alpha_{h,m}$.
Then we have, using Rule \textsf{sr1'}, 
\begin{equation}
\label{Eqn:parallel}
\begin{tikzpicture}[baseline=(A1),black,node distance=0.5cm,  
solidvert/.style={draw, circle,  fill=red, inner sep=1.8pt},
hollowvert/.style={  draw,  circle,  fill=white,  inner sep=1.8pt},
  every loop/.style={min distance=40pt,in=-30,out=90,looseness=20}]
\node[hollowvert] (A1) at (0,0) {};
\node[hollowvert] (A2) at (1,0) {};
\draw (A1) -- (-0.2,0.15); \draw (A1) -- (-0.3,0);  \draw (A1) -- (-0.2,-0.15);
\draw (A2) -- (1.2,0.15); \draw (A2) -- (1.3,0);  \draw (A2) -- (1.2,-0.15);
\path[-,bend left] (A1) edge [above] node  [] {\scriptsize $A_r$} (A2);    
\path[-,bend right] (A1) edge [below] node  [] {\scriptsize $A_s$} (A2);     
\draw (-1,0) node[anchor=west]{$\sS( \hspace{2cm}, \emptyset; w_{h,j}) =$};
\def\xsh {0.2}
\draw (3.2+\xsh,0) node[anchor=west]{$\delta_{r,s} \  \sS( \hspace{2cm}, \emptyset; w_{h+1,j})$};
\node[hollowvert] (B1) at (5+\xsh,0) {};
\node[hollowvert] (B2) at (6+\xsh,0) {};
\path[-] (B1) edge [above] node  [] {\scriptsize $A_s$} (B2);
\draw (B1) -- (4.8+\xsh,0.15); \draw (B1) -- (4.7+\xsh,0);  \draw (B1) -- (4.8+\xsh,-0.15);
\draw (B2) -- (6.2+\xsh,0.15); \draw (B2) -- (6.3+\xsh,0);  \draw (B2) -- (6.2+\xsh,-0.15);
\end{tikzpicture}
\end{equation}
so that 
$$ \sum_{m=1}^{m_h} \alpha_{h,m} \ \sS(F_h, \emptyset; w_{h,m}) = 
\sum_{m=1}^{m_h}  \alpha_{h+1,m} \ \sS(F_{h+1}, \emptyset; w_{h+1,m}) $$
and the same holds with edge weights replaced by $\kappa \bullet w_{h,m}$ and $\kappa \bullet w_{h+1,m}$ by Proposition \ref{Prop:same}.

\medskip
\noindent
\textsf{Delta}:
$F_{h+1}$ is obtained from $F_h$ by  replacing a Delta (the edges of a triangle) with a Wye (a new vertex adjacent only to the three vertices of that triangle). Let us treat $E(F_{h+1})$ as equal to $E(F_h)$ with the understanding that the three edges $e_1$, $e_2$, $e_3$ of the triangle are now the three edges of the Wye, with the convention that $e_u$ in the second graph is incident to neither end of $e_u$ in the first.

Fix an $m$, $1\le m\le m_h$ and set  $A_i=w_{h,m}(e_1)$, $A_j=w_{h,m}(e_2)$, and $A_k=w_{h,m}(e_3)$. Equation~(\ref{Eqn:D-Y}) can be written
\begin{equation}
\label{Eqn:newD-Y}
\sum_{x,y,z\in X} (A_i)_{x,z}(A_j)_{x,y}(A_k)_{y,z} \ \hat{x} \otimes \hat{y} \otimes \hat{z}
=  \sum_{r,s,t=0}^d \rho_{r,s,t}^{i,j,k}\sum_{w,x,y,z\in X}  (A_r)_{w,x}(A_s)_{w,y}(A_t)_{w,z} \ \hat{x} \otimes \hat{y} \otimes \hat{z}
\end{equation}
where $\rho_{r,s,t}^{i,j,k} =  \frac{1}{|X|^3} \displaystyle{\sum_{q_{ab}^c > 0}} \sigma^{i,j,k}_{a, b, c }  Q_{ra} Q_{sb}Q_{tc}$ for all $0\le i,j,k,r,s,t \le d$.
Hence, for all $x, y, z\in X$, 
\begin{equation}
\label{Eqn:hollowD-Y}
   (A_i)_{x,z}(A_j)_{x,y}(A_k)_{y,z}
=
    \sum_{r,s,t=0}^d \rho^{i,j,k}_{r,s,t} \ \sum_{w\in X}  (A_r)_{w,x}(A_s)_{w,y}(A_t)_{w,z}.
\end{equation}
As above, it will be simpler to allow several indices of summation on the right hand side of Equations (\ref{Eqn:Induction}).
For each $0\le r, s, t\le d$, define 
$w_{h+1,m}^{r,s,t}: E(F_{h+1}) \rightarrow \BMA$ via
\begin{equation*}
    w_{h+1,m}^{r,s,t}(e) = \begin{cases}
        A_s &\text{if} \ e=e_1 \cr
        A_t &\text{if} \ e=e_2 \cr
        A_r &\text{if} \ e=e_3 \cr
        w_{h,m}(e) &\text{otherwise.} \end{cases}
\end{equation*}
It follows from (\ref{Eqn:hollowD-Y}) (cf.\ \cite[Proposition 1.5]{WJMscaff}) that
\begin{equation}
\label{Eqn:Delta}
\sS \left( \inlineWhiskeredDelta{$A_i$}{$A_j$}{$A_k$} \ , \ \emptyset \ ; \ w_{h,m} \ \right) = \sum_{r,s,t=0}^d  \rho^{i,j,k}_{r,s,t}  \ \sS \left( \inlineWhiskeredWye{$A_r$}{$A_s$}{$A_t$} \ , \ \emptyset \ ; \ w_{h+1,m}^{r,s,t} \ \right).
\end{equation}
Using Proposition \ref{Prop:same}, we likewise obtain
\begin{equation}
\label{Eqn:Deltakappa}
\sS \left( \inlineWhiskeredDelta{$A_i$}{$A_j$}{$A_k$} \ , \ \emptyset \ ; \ \kappa \bullet w_{h,m} \ \right) = \sum_{r,s,t=0}^d  \rho^{i,j,k}_{r,s,t}  \ \sS \left( \inlineWhiskeredWye{$A_r$}{$A_s$}{$A_t$} \ , \ \emptyset \ ; \ \kappa \bullet w_{h+1,m}^{r,s,t} \ \right).
\end{equation}
Summing over $m$ with coefficients $\alpha_{h+1,m}^{r,s,t}= \rho^{i,j,k}_{r,s,t} \alpha_{h,m}$, we obtain our induction step for the Delta-Wye transformation.

\medskip
\noindent
\textsf{Wye}:
$F_{h+1}$ is obtained from $F_h$ by  replacing a Wye with a Delta. We employ the same conventions as in the previous case.

Fix $m$ and locate those $i$, $j$, $k$ for which  $A_i=w_{h,m}(e_1)$, $A_j=w_{h,m}(e_2)$, and $A_k=w_{h,m}(e_3)$.

With $\varrho_{r,s,t}^{i,j,k} = \sum_{a,b,c=0}^d P_{ai}P_{bj}P_{ck} \tau_{r,s,t}^{a,b,c}$, 
Equation~(\ref{Eqn:Y-D}) can be written
\begin{equation}
\label{Eqn:newY-D}
\sum_{w,x,y,z\in X} (A_i)_{w,x}(A_j)_{w,y}(A_k)_{w,z} \ \hat{x} \otimes \hat{y} \otimes \hat{z}
= \sum_{p_{rs}^t > 0} \varrho_{r,s,t}^{i,j,k}  (A_r)_{x,z}(A_s)_{x,y}(A_t)_{y,z} \ \hat{x} \otimes \hat{y} \otimes \hat{z}.
\end{equation}
Hence, for all $x, y, z\in X$, 
\begin{equation}
\label{Eqn:hollowY-D}
\sum_{w\in X} (A_i)_{w,x}(A_j)_{w,y}(A_k)_{w,z}
= \sum_{p_{rs}^t > 0}  \varrho_{r,s,t}^{i,j,k}\  (A_r)_{x,z}(A_s)_{x,y}(A_t)_{y,z}.
\end{equation}
For each $0\le r, s, t\le d$, define 
$w_{h+1,m}^{r,s,t} :E(F_{h+1}) \rightarrow \BMA$ via
\begin{equation*}
    w_{h+1,m}^{r,s,t}(e) = \begin{cases}
        A_t &\text{if} \ e=e_1 \cr
        A_r &\text{if} \ e=e_2 \cr
        A_s &\text{if} \ e=e_3 \cr
        w_{h,m}(e) &\text{otherwise} \end{cases}
\end{equation*}
and everything goes through as in the previous case, giving
\begin{equation}
\label{Eqn:Wye}
\sS \left( \inlineWhiskeredWye{$A_i$}{$A_j$}{$A_k$} \ , \ \emptyset \ ; \ w_{h,m} \ \right) = \sum_{r,s,t=0}^d  \varrho^{i,j,k}_{r,s,t}  \ \sS \left( \inlineWhiskeredDelta{$A_r$}{$A_s$}{$A_t$} \ , \ \emptyset \ ; \ w_{h+1,m}^{r,s,t} \ \right)
\end{equation}
and
\begin{equation}
\label{Eqn:Wyekappa}
\sS \left( \inlineWhiskeredWye{$A_i$}{$A_j$}{$A_k$} \ , \ \emptyset \ ; \ \kappa \bullet w_{h,m} \ \right) = \sum_{r,s,t=0}^d  \varrho^{i,j,k}_{r,s,t}  \ \sS \left( \inlineWhiskeredDelta{$A_r$}{$A_s$}{$A_t$} \ , \ \emptyset \ ; \ \kappa \bullet w_{h+1,m}^{r,s,t} \ \right).
\end{equation}
Summing over $m$ and using $\alpha_{h+1,m}^{r,s,t} =  \varrho_{r,s,t}^{i,j,k}\alpha_{h,m}$, we establish our induction step. \qed
\end{proof}

We are ready to present our main theorem. By ``a graph in'' an association scheme $(X,\cR)$ we mean  a graph with vertex set $X$ whose adjacency relation is a union of non-identity basis relations from $\cR$.

\begin{theorem}
\label{Thm:quantumisographs}
Let $(X, \cR)$ and $(Y, \cS)$ be exactly triply regular  symmetric association schemes that have the same Delta-Wye parameters.
Let $G'$ be a graph in $(Y, \cS)$ corresponding to $G$ in $(X,\cR)$.   Then $G$ and $G'$ are quantum isomorphic.
\end{theorem}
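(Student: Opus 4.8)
The plan is to combine Theorem~\ref{Thm:quantum} (the Man\v{c}inska--Roberson homomorphism-counting characterization) with Theorem~\ref{Thm:EqualScaffolds}, which we have just established. By Theorem~\ref{Thm:quantum}, it suffices to show that $\hom(F,G)=\hom(F,G')$ for every planar graph $F$. So let $F$ be a planar graph. Since the number of homomorphisms is multiplicative over connected components, and since an isolated vertex of $F$ contributes a factor of $|X|$ on the left and $|Y|$ on the right (and $|X|=|Y|$ because, e.g., $p_{00}^0=1$ forces $A_0=A_0'$ and summing coefficients in any Delta-Wye relation recovers $|X|$ from the scheme parameters, which agree by Proposition~\ref{Prop:same}), we may reduce to the case where $F$ is connected.

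First I would invoke Lemma~\ref{Lem:homsarescaffolds}: with $w_G:E(F)\to\BMA$ the constant weight function $e\mapsto A_G$, we have $\hom(F,G)=\sS(F,\emptyset;w_G)$, and likewise $\hom(F,G')=\sS(F,\emptyset;w_{G'})$ where $w_{G'}(e)=A_{G'}=\kappa(A_G)$ for every edge $e$. The key observation is that $w_{G'}=\kappa\bullet w_G$, precisely because $G'$ is the graph in $(Y,\cS)$ \emph{corresponding} to $G$ in the sense of Definition~\ref{Def:corresponding}, i.e.\ $A_{G'}=\kappa(A_G)$. Crucially, $A_G\in\BMA$ (since $G$ is a graph in the scheme $(X,\cR)$, its adjacency matrix is a sum of non-identity basis matrices $A_i$, hence lies in the Bose-Mesner algebra), so the constant weight function $w_G$ really does map into $\BMA$ and Theorem~\ref{Thm:EqualScaffolds} applies.

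Then I would simply apply Theorem~\ref{Thm:EqualScaffolds} to the connected plane graph $F$ (choosing any planar embedding) with edge weights $w_G$, concluding $\sS(F,\emptyset;w_G)=\sS(F,\emptyset;\kappa\bullet w_G)=\sS(F,\emptyset;w_{G'})$. Chaining the equalities gives $\hom(F,G)=\hom(F,G')$ for every connected planar $F$, hence for every planar $F$ by multiplicativity, and Theorem~\ref{Thm:quantum} yields that $G$ and $G'$ are quantum isomorphic.

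Honestly, there is no real obstacle remaining: all the substantive work lives in Theorem~\ref{Thm:EqualScaffolds} (the scaffold-by-scaffold induction through Epifanov's reduction sequence) and in Theorem~\ref{Thm:quantum} (cited from \cite{manrob2}). The only points requiring a moment's care are the bookkeeping reductions just mentioned --- verifying $|X|=|Y|$, handling disconnected $F$ and isolated vertices via multiplicativity of $\hom(\cdot,\cdot)$, and checking that $A_G$ genuinely lies in $\BMA$ so the hypotheses of Theorem~\ref{Thm:EqualScaffolds} are met --- and confirming that the "corresponding graph" construction of Definition~\ref{Def:corresponding} indeed produces a $01$-matrix that is symmetric with zero diagonal (so that $G'$ is a bona fide simple graph), which follows from $\kappa$ being a Bose-Mesner isomorphism carrying $A_0$ to $A_0'$.
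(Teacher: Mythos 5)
Your proposal is correct and follows essentially the same route as the paper: reduce to counting homomorphisms via Theorem~\ref{Thm:quantum}, express $\hom(F,G)$ as the rootless scaffold $\sS(F,\emptyset;w_G)$, and apply Theorem~\ref{Thm:EqualScaffolds} with $w_{G'}=\kappa\bullet w_G$. Your extra bookkeeping (reducing disconnected $F$ to connected components and checking $|X|=|Y|$, since Theorem~\ref{Thm:EqualScaffolds} is stated only for connected plane graphs) is a small but legitimate detail the paper's one-line proof leaves implicit.
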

\begin{proof}
Given any planar graph $F$, we define functions $w$ and $w'$ such that $w(e)=A_G$ and $w'(e) = A_{G'}$ for all $e\in E(F)$.  
By Theorem~\ref{Thm:EqualScaffolds}, we have
$$
\hom(F,G)=\sS(F, \emptyset; w) = \sS(F, \emptyset; w')=\hom(F,G').
$$
By Theorem~\ref{Thm:quantum}, we conclude that $G$ and $G'$ are quantum isomorphic.
\qed
\end{proof}

\section{Hadamard graphs}
\label{Sec:Hadamard}

Hadamard graphs are very closely related to Hadamard matrices, which have received much attention, and are themselves a well-studied family of distance-regular graphs. Our goal in this section is to show that Theorem \ref{Thm:quantumisographs} applies to any pair of Hadamard graphs of the same order.

\subsection{Quantum isomorphism}
\label{Subsec:Hadqc}

A Hadamard matrix is an $n\times n$ $\pm 1$ matrix $H$ satisfying 
$$
HH^\top = nI.
$$
Given an $n\times n$ Hadamard matrix, we construct a graph $G$
on vertex set 
$$ X = \{ r_1^+, r_1^-, \ldots, r_n^+, r_n^-, \ c_1^+, c_1^-, \ldots, c_n^+,c_n^- \},$$
where $r^+_i$ is adjacent to $c^+_j$ and $r^-_i$ is adjacent to $c^-_j$ if $H_{ij}=1$, $r^+_i$ is adjacent to $c^-_j$ and $r^-_i$ is adjacent to $c^+_j$ if $H_{ij}=-1$.
This graph, called \emph{a Hadamard graph of order $4n$}, is a distance regular graph of diameter four with intersection array
$$\{ n,n-1,\frac{n}{2},1; \ 1,\frac{n}{2}, n-1,n \}.$$
For $j=0,\ldots, 4$, we use $A_j$ to denote the $j$-th distance matrix of 
$G$. Then $A_0, \ldots, A_4$ are the adjacency matrices of a 4-class symmetric association scheme.    The matrix of eigenvalues of this association scheme is
$$
P=
\left[\begin{array}{ccccc}
1&n& 2n-2&n & 1\\
1&\sqrt{n}&0&-\sqrt{n}&-1\\
1&0&-2&0&1\\
1&-\sqrt{n}&0&\sqrt{n}&-1\\
1&-n& 2n-2&-n & 1
\end{array}\right].
$$
Since $P^2=4nI$, this association scheme is formally self-dual which means $P_{ij} =Q_{ij}$ and $p_{ij}^k=q_{ij}^k$, for $i,j,k=0,\ldots, 4$.
The intersection numbers are given in  $L_i=[p_{ij}^k]_{k,j}$ with $L_0=I$ and $L_1, \ldots, L_4$ listed in order as
$$\left[ \begin{array}{ccccc} 
\scriptstyle{0}  & \scriptstyle{n}  & \scriptstyle{0}  & \scriptstyle{0}  & \scriptstyle{0}  \\
\scriptstyle{1}  & \scriptstyle{0}  & \scriptstyle{n-1}  & \scriptstyle{0}  & \scriptstyle{0}  \\
\scriptstyle{0}  & \scriptstyle{n/2}  & \scriptstyle{0}  & \scriptstyle{n/2}  & \scriptstyle{0}  \\
\scriptstyle{0}  & \scriptstyle{0}  & \scriptstyle{n-1}  & \scriptstyle{0}  & \scriptstyle{1}  \\
\scriptstyle{0}  & \scriptstyle{0}  & \scriptstyle{0}  & \scriptstyle{n}  & \scriptstyle{0}      \end{array} \right], 
\left[ \begin{array}{ccccc} 
\scriptstyle{0}  & \scriptstyle{0}  & \scriptstyle{2n-2}  & \scriptstyle{0}  & \scriptstyle{0}  \\
\scriptstyle{0}  & \scriptstyle{n-1}  & \scriptstyle{0}  & \scriptstyle{n-1}  & \scriptstyle{0}  \\
\scriptstyle{1}  & \scriptstyle{0}  & \scriptstyle{2n-4}  & \scriptstyle{0}  & \scriptstyle{1}  \\
\scriptstyle{0}  & \scriptstyle{n-1}  & \scriptstyle{0}  & \scriptstyle{n-1}  & \scriptstyle{0}  \\
\scriptstyle{0}  & \scriptstyle{0}  & \scriptstyle{2n-2}  & \scriptstyle{0}  & \scriptstyle{0}      \end{array} \right], 
\left[ \begin{array}{ccccc} 
\scriptstyle{0}  & \scriptstyle{0}  & \scriptstyle{0}  & \scriptstyle{n}  & \scriptstyle{0}  \\
\scriptstyle{0}  & \scriptstyle{0}  & \scriptstyle{n-1}  & \scriptstyle{0}  & \scriptstyle{1}  \\
\scriptstyle{0}  & \scriptstyle{n/2}  & \scriptstyle{0}  & \scriptstyle{n/2}  & \scriptstyle{0}  \\
\scriptstyle{1}  & \scriptstyle{0}  & \scriptstyle{n-1}  & \scriptstyle{0}  & \scriptstyle{0}  \\
\scriptstyle{0}  & \scriptstyle{n}  & \scriptstyle{0}  & \scriptstyle{0}  & \scriptstyle{0}      \end{array} \right],  
\left[ \begin{array}{ccccc} 
\scriptstyle{0}  & \scriptstyle{0}  & \scriptstyle{0}  & \scriptstyle{0}  & \scriptstyle{1}  \\
\scriptstyle{0}  & \scriptstyle{0}  & \scriptstyle{0}  & \scriptstyle{1}  & \scriptstyle{0}  \\
\scriptstyle{0}  & \scriptstyle{0}  & \scriptstyle{1}  & \scriptstyle{0}  & \scriptstyle{0}  \\
\scriptstyle{0}  & \scriptstyle{1}  & \scriptstyle{0}  & \scriptstyle{0}  & \scriptstyle{0}  \\
\scriptstyle{1}  & \scriptstyle{0}  & \scriptstyle{0}  & \scriptstyle{0}  & \scriptstyle{0}      \end{array} \right].$$
We see that $N_p=N_q=35$.

In Nomura's construction of spin models from Hadamard graphs \cite{nomura}, he proves that the association scheme of Hadamard graphs are triply regular
by computing all parameters  $\upsilon^{i,j,k}_{r,s,t}$ satisfying
$$ \inlineSWye{W}{$A_i$,$A_j$,$A_k$}{1} = \sum_{r,s,t} \upsilon^{i,j,k}_{r,s,t}  \inlineSDelta{D}{$A_r$,$A_s$,$A_t$}{1}.$$
By  Lemma~\ref{Lem:exactlytrip}, the association scheme of a Hadamard graph is exactly triply regular.

Further, all of the parameters $\upsilon^{i,j,k}_{r,s,t}$ depend only on $n$.
For instance, when $i=j=k=1$, 
\begin{equation}
\label{Eqn:upsilon}
    \upsilon_{0,0,0}^{1,1,1}=n, \quad \upsilon_{0,2,2}^{1,1,1}=\upsilon_{2,0,2}^{1,1,1}=\upsilon_{2,2,0}^{1,1,1}=\frac{n}{2}, \quad
\upsilon_{2,2,2}^{1,1,1}=\frac{n}{4},
\end{equation}
and
$\upsilon_{r,s,t}^{1,1,1}=0$ for all other $r$, $s$ and $t$.
Since 
$$ \inlineSWye{W}{$A_i$,$A_j$,$A_k$}{1} = \sum_{r,s,t} \upsilon^{i,j,k}_{r,s,t}  \sum_{a,b,c} P_{ar}P_{bs}P_{ct} \inlineSDelta{D}{$E_a$,$E_b$,$E_c$}{1},$$ 
and the coefficients on the right-hand side depend only on $n$,
the association schemes of non-isomorphic Hadamard graphs of order $4n$
have the same Delta-Wye parameters.   
 Our next result follows immediately from Theorem~\ref{Thm:quantumisographs}.

\begin{theorem}
\label{Thm:Hadamard}
Any two Hadamard graphs of order $4n$ are quantum isomorphic.
\qed
\end{theorem}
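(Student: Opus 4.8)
The plan is to derive Theorem~\ref{Thm:Hadamard} directly from Theorem~\ref{Thm:quantumisographs}. For this I need to verify three points: (a) the $4$-class association scheme carried by the distance matrices of a Hadamard graph of order $4n$ is exactly triply regular; (b) the schemes arising from any two Hadamard matrices of the same order have the same Delta-Wye parameters in the sense of Definition~\ref{Def:sameDeltaWye}; and (c) a Hadamard graph of order $4n$ is, under the Bose--Mesner isomorphism $\kappa$ of Definition~\ref{Def:corresponding}, the graph corresponding to any other Hadamard graph of order $4n$. Once these are in place, Theorem~\ref{Thm:quantumisographs} applies verbatim.

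For (a) I would recall Nomura's computation~\cite{nomura} of the triple-regularity parameters $\upsilon^{i,j,k}_{r,s,t}$ of the Hadamard scheme, a sample of which (for $i=j=k=1$) appears in~(\ref{Eqn:upsilon}); their existence establishes, via Jaeger's criterion as stated in Section~\ref{Subsec:tr}, that the scheme is triply regular. The scheme is moreover formally self-dual, since the explicit eigenvalue matrix $P$ satisfies $P^2 = 4nI$, whence $P=Q$ and $p_{ij}^k = q_{ij}^k$ for all $i,j,k$, so that $N_p = N_q$ (here both equal $35$). Lemma~\ref{Lem:exactlytrip} then promotes triple regularity to exact triple regularity.

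For (b) I would fix, in each of the two schemes, the ordering $A_0,\dots,A_4$ of distance matrices and the ordering $E_0,\dots,E_4$ of primitive idempotents determined by the common explicit matrix $P$ through $A_i = \sum_j P_{ji}E_j$. Under these orderings every structural constant --- the $P_{ij}$, the $Q_{ij}$, the intersection numbers $p_{ij}^k$, the Krein parameters $q_{ij}^k$, and Nomura's $\upsilon^{i,j,k}_{r,s,t}$ --- is one and the same explicit function of $n$, hence agrees for the two schemes. The Delta-Wye parameters $\sigma^{i,j,k}_{r,s,t}$ of Definition~\ref{Def:D-Ypar} are obtained from this data by a fixed recipe: use triple regularity to write each Wye scaffold on the $A$'s as a combination of Delta scaffolds on the $A$'s with coefficients built from the $\upsilon$'s, then pass to the idempotent basis on both sides via~(\ref{Eqn:PQ}); formal self-duality is precisely what lets one re-use Nomura's formulas after this change of basis. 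Since every ingredient depends only on $n$, so do all $\sigma^{i,j,k}_{r,s,t}$, and the two schemes have the same Delta-Wye parameters.

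For (c) I would observe that the bijection $A_i \mapsto A_i'$ underlying $\kappa$ carries the distance-$1$ matrix of one Hadamard graph of order $4n$ to the distance-$1$ matrix of the other, and the Hadamard graph is exactly the graph with adjacency matrix $A_1$; hence each is the graph corresponding to the other, and Theorem~\ref{Thm:quantumisographs} yields quantum isomorphism. I expect the one delicate point to be step~(b): Nomura's parameters are presented in the normalization ``Wye of $A$'s equals a combination of Delta of $A$'s,'' whereas Definition~\ref{Def:D-Ypar} normalizes via ``Delta of $A$'s equals a combination of Wye of $E$'s,'' so the self-duality must be invoked carefully to translate between these forms, and one must check that the chosen orderings of the $E_j$ are forced (and therefore consistent across the two schemes) by the rigidity of the eigenvalue matrix $P$. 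The remaining steps are routine bookkeeping with results already established above.
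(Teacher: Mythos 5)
Your proposal is correct and follows essentially the same route as the paper: Nomura's computation of the $\upsilon^{i,j,k}_{r,s,t}$ gives triple regularity, formal self-duality ($P^2=4nI$, so $N_p=N_q=35$) together with Lemma~\ref{Lem:exactlytrip} upgrades this to exact triple regularity, the fact that all coefficients are explicit functions of $n$ alone gives equality of the Delta-Wye parameters under the distance/eigenvalue orderings, and Theorem~\ref{Thm:quantumisographs} finishes. Your extra care about translating Nomura's ``Wye of $A$'s into Delta of $A$'s'' normalization into the $\sigma,\tau$ normalization of Definition~\ref{Def:D-Ypar}, and about the orderings of the $E_j$ being forced by the distinct eigenvalues of $A_1$, is a point the paper passes over more quickly but is handled the same way in substance.
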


\begin{remark}
Given a Hadamard graph of order $4n$, let $s, t_0, t_1, t_2, t_3, t_4$ be complex numbers satisfying
$$
s^2+2(2n-1)s+1=0, \quad
t_0^2=\frac{2\sqrt{n}}{(4n-1)s+1}, \quad t_1^4=1, \quad
t_2=st_0,\quad t_3=-t_1
\quad\text{and}\quad t_4=t_0.
$$
Then the matrices $W_+=\sum_{j=0}^4 t_j A_j$ and $W_-=\sum_{j=0}^4 t_j^{-1} A_j$ form a spin model \cite{nomura}.
In \cite{jones},  Jones constructed from each link diagram a plane graph $F$ with signed edges and showed that the scaffold
$\sS(F, \emptyset; w)$, where 
$$
w(e) = \begin{cases} W_+ & \text{if the sign of $e$ is $+$}\\ W_- & \text{if the sign of $e$ is $-$}\end{cases},
$$
is a link invariant with some simple normalization.

Using Theorem~\ref{Thm:EqualScaffolds}, we reproduce Jaeger's proof that the spin models from any Hadamard graphs of the same order give the same link invariant \cite[Proposition 22]{jaeger}.

\end{remark}

\subsection{Examples of homomorphism counts}
\label{Subsec:examples}

We have seen that scaffolds of order zero include homomorphism counts and we
have given a general recipe for reducing these scaffolds to sums of single-vertex scaffolds.  Here, we give two concrete examples where we compute $\hom(F,G)$ where $F$ is a small planar graph and $G$ is a Hadamard graph on $4n$ vertices. In the first example, $F$ is a series-parallel graph and the Delta-Wye equations are not needed.


\begin{example}
The number of homomorphisms from the complete bipartite graph $K_{2,3}$ into a Hadamard graph $G$ on $4n$
vertices with adjacency matrix $A=A_1$ is  $\hom(K_{2,3} ,G)=n^4(n+3)$, the sum of entries of the matrix $(A_1^2) \circ (A_1^2) \circ (A_1^2)$
using the expansion $A_1^2 = nA_0 + \frac{n}{2} A_2$:
\begin{center}
\begin{tikzpicture}
[black,hollownode/.style={draw, circle, fill=white, inner sep=1.8pt},rootnode/.style={draw, circle, fill=red, inner sep=1.8pt},
      every loop/.style={min distance=40pt,in=-30,out=90,looseness=20}]
\node[hollownode] (A1) at (0,0) {};
\node[hollownode] (B1) at (1.25,-0.75) {};
\node[hollownode] (B2) at (1.25,0) {};
\node[hollownode] (B3) at (1.25,0.75) {};
 \node[hollownode] (A2) at (2.5,0) {};
\path (A1) edge node[below] {$\scriptscriptstyle{A}$} (B1);  
\path (A1) edge node[above] {$\scriptscriptstyle{A}$} (B2); 
\path (A1) edge node[above] {$\scriptscriptstyle{A}$} (B3); 
\path (A2) edge node[below] {$\scriptscriptstyle{A}$} (B1); 
\path (A2) edge node[above] {$\scriptscriptstyle{A}$} (B2); 
\path (A2) edge node[above] {$\scriptscriptstyle{A}$} (B3); 
\node (eq1) at (3,0) {$=$};
\def\xsh {3.5}
\node[hollownode] (C1) at (\xsh+0,0) {};
\node[hollownode] (C2) at (\xsh+2,0) {};
\path[bend right=60] (C1) edge node[above] {$\scriptscriptstyle{A^2}$} (C2);  
\path (C1) edge node[above] {$\scriptscriptstyle{A^2}$} (C2); 
\path[bend left=60] (C1) edge node[above] {$\scriptscriptstyle{A^2}$} (C2); 
\node (eq2) at (\xsh+2.5,0) {$=$};
\def\xxsh {6.5}
\node[hollownode] (D1) at (\xxsh+0,0) {};
\node[hollownode] (D2) at (\xxsh+2,0) {};  
\path (D1) edge node[above] {$\scriptscriptstyle{(A^2)\circ (A^2)\circ (A^2)}$} (D2); 
\end{tikzpicture}
\end{center}
\end{example}

Now let's work with a more complicated example where the extended series-parallel reduction rules are applied in conjunction with the scaffold expansion afforded by triple regularity.

\begin{example}
Consider the graph $F$, below, obtained by taking a 1-clique sum 
of a 4-cycle and the graph obtained by deleting one vertex of the 3-cube. Let $A$ be the adjacency matrix of a 
Hadamard graph $G$ on $4n$ vertices with Bose-Mesner algebra having basis of 
Schur idempotents $\{A_0=I,A_1=A,A_2,A_3,A_4\}$ ordered according to distance in $G$. We compute, using scaffold rules \textsf{sr1} (series reduction) and 
\textsf{sr'} (loop removal),

\begin{center}
\resizebox{5in}{!}{
  \begin{tikzpicture}[black,hollownode/.style={draw, circle, fill=white, inner sep=1.8pt},rootnode/.style={draw, circle, fill=red, inner sep=1.8pt},
      every loop/.style={min distance=40pt,in=-30,out=90,looseness=20}]
      \def\r {1.0}
\begin{scope}[shift={(0,0)}] 
\node[hollownode] (e) at (0:\r) {};   \node[hollownode] (ne) at (60:\r) {};   \node[hollownode] (nw) at (120:\r) {};   
\node[hollownode] (w) at (180:\r) {};   \node[hollownode] (sw) at (240:\r) {};   \node[hollownode] (se) at (300:\r) {};   
\node[hollownode] (c) at (0:0) {}; 
 \path (e) edge node[above,pos=0.4] {$\scriptscriptstyle{\ \ A}$} (ne);  \path (ne) edge node[above] {$\scriptscriptstyle{A}$} (nw);  
 \path (nw) edge node[above left] {$\scriptscriptstyle{A}$} (w);
 \path (w) edge node[below left] {$\scriptscriptstyle{A}$} (sw);  \path (sw) edge node[below] {$\scriptscriptstyle{A}$} (se);  
 \path (se) edge node[below right] {$\scriptscriptstyle{A}$} (e); 
 \path (c) edge node[above] {$\scriptscriptstyle{A}$} (w);  \path (c) edge node[below right] {$\scriptscriptstyle{A}$} (ne);  
 \path (c) edge node[above right] {$\scriptscriptstyle{A}$} (se);  
 \end{scope}
 \begin{scope}[shift={(2,0)}] 
\node[hollownode] (lft) at (180:\r) {};   \node[hollownode] (top) at (90:\r) {};   
\node[hollownode] (rigt) at (0:\r) {};   \node[hollownode] (bot) at (270:\r) {};   
 \path (lft) edge node[above,pos=0.4] {$\scriptscriptstyle{A\ }$} (top);  \path (top) edge node[above right] {$\scriptscriptstyle{A}$}  (rigt);  
 \path (rigt) edge node[below right] {$\scriptscriptstyle{A}$} (bot);  \path (bot) edge node[below left] {$\scriptscriptstyle{A}$}   (lft); 
 \end{scope}
\node (eq) at (3.5,0) {$=$};
\begin{scope}[shift={(5,0)}] 
\node[hollownode] (e) at (0:\r) {};   \node[hollownode] (ne) at (60:\r) {};   \node[hollownode] (nw) at (120:\r) {};   
\node[hollownode] (w) at (180:\r) {};   \node[hollownode] (sw) at (240:\r) {};   \node[hollownode] (se) at (300:\r) {};   
\node[hollownode] (c) at (0:0) {}; 
 \path (e) edge node[above,pos=0.4] {$\scriptscriptstyle{\ \ A}$} (ne);  \path (ne) edge node[above] {$\scriptscriptstyle{A}$} (nw);  
 \path (nw) edge node[above left] {$\scriptscriptstyle{A}$} (w);
 \path (w) edge node[below left] {$\scriptscriptstyle{A}$} (sw);  \path (sw) edge node[below] {$\scriptscriptstyle{A}$} (se);  
 \path (se) edge node[below right] {$\scriptscriptstyle{A}$} (e); 
 \path (c) edge node[above] {$\scriptscriptstyle{A}$} (w);  \path (c) edge node[below right] {$\scriptscriptstyle{A}$} (ne);  
 \path (c) edge node[above right] {$\scriptscriptstyle{A}$} (se);  
 \path (e) edge[loop,in=45,out=-45,looseness=20] node[right] {$\scriptscriptstyle{A^4}$}  (e);
 \end{scope}
 \node (eq2) at (-1.0,-2.5) {$=$};
 \node (coeff) at (-0.4,-2.5) {$\scriptstyle{\mathsf{tr}(A^4)}$};
\begin{scope}[shift={(1.3,-2.5)}] 
\node[hollownode] (e) at (0:\r) {};   \node[hollownode] (ne) at (60:\r) {};  
\node[hollownode] (w) at (180:\r) {};    \node[hollownode] (se) at (300:\r) {};   
\node[hollownode] (c) at (0:0) {}; 
 \path (e) edge node[above,pos=0.4] {$\scriptscriptstyle{\ \ A}$} (ne);  \path (ne) edge[bend right] node[above] {$\scriptscriptstyle{A^2}$} (w);  
 \path (w) edge[bend right] node[below left] {$\scriptscriptstyle{A^2}$} (se);  
 \path (se) edge node[below right] {$\scriptscriptstyle{A}$} (e); 
 \path (c) edge node[above] {$\scriptscriptstyle{A}$} (w);  \path (c) edge node[below right] {$\scriptscriptstyle{A}$} (ne);  
 \path (c) edge node[above right] {$\scriptscriptstyle{A}$} (se);  
 \end{scope}
 \node (eq2) at (3.0,-2.5) {$=$};
 \node (coeff) at (3.9,-2.5) {$\scriptstyle{2n^3(n+1)}$};
\begin{scope}[shift={(6.0,-2.5)}] 
\node[hollownode] (ne) at (60:\r) {};   \node[hollownode] (w) at (180:\r) {};    \node[hollownode] (se) at (300:\r) {};   
\node[hollownode] (c) at (0:0) {}; 
 \path (ne) edge[bend right] node[above] {$\scriptscriptstyle{A^2}$} (w);  
 \path (w) edge[bend right] node[below left] {$\scriptscriptstyle{A^2}$} (se);  
 \path (se) edge[bend right] node[right] {$\scriptscriptstyle{A^2}$} (ne);   
 \path (c) edge node[above] {$\scriptscriptstyle{A}$} (w);  \path (c) edge node[below right] {$\scriptscriptstyle{A}$} (ne);  
 \path (c) edge node[above right] {$\scriptscriptstyle{A}$} (se);  
 \end{scope}
    \end{tikzpicture} 
}
\end{center}
As a consequence of exact triple regularity, we have from (\ref{Eqn:upsilon})
\inlineSWye{A111}{$A$,$A$,$A$}{0.7} $=$
$$n  \inlineSDelta{D}{$A_0$,$A_0$,$A_0$}{0.7} 
+  \frac{n}{2}   \inlineSDelta{D}{$A_0$,$A_2$,$A_2$}{0.7} 
+  \frac{n}{2}    \inlineSDelta{D}{$A_2$,$A_0$,$A_2$}{0.7} 
+  \frac{n}{2}    \inlineSDelta{D}{$A_2$,$A_2$,$A_0$}{0.7}  +  \frac{n}{4}    \inlineSDelta{D}{$A_2$,$A_2$,$A_2$}{0.7}. $$
We make this substitution to find

\resizebox{4.5in}{!}{
  \begin{tikzpicture}[black,hollownode/.style={draw, circle, fill=white, inner sep=1.8pt},
  rootnode/.style={draw, circle, fill=red, inner sep=1.8pt},
  every loop/.style={min distance=40pt,in=-30,out=90,looseness=20}]
\def\r {1.0}
 \node (hom) at (-2.3,2) {$\mathsf{hom}(F,G) =$};
 \node (coeff) at (0,2) {$2n^3(n+1)$};
\begin{scope}[shift={(2.3,2)}] 
\node[hollownode] (ne) at (60:\r) {};   \node[hollownode] (w) at (180:\r) {};    \node[hollownode] (se) at (300:\r) {};   
\node[hollownode] (c) at (0:0) {}; 
 \path (ne) edge[bend right] node[above] {$\scriptscriptstyle{A^2}$} (w);  
 \path (w) edge[bend right] node[below left] {$\scriptscriptstyle{A^2}$} (se);  
 \path (se) edge[bend right] node[right] {$\scriptscriptstyle{A^2}$} (ne);   
 \path (c) edge node[above] {$\scriptscriptstyle{A}$} (w);  \path (c) edge node[below right] {$\scriptscriptstyle{A}$} (ne);  
 \path (c) edge node[above right] {$\scriptscriptstyle{A}$} (se);  
 \end{scope}
 \node (nexteq) at (5.2,2) {$ = \ 2n^3(n+1) \ \cdot$};
\end{tikzpicture} 
} 
$$\left[ n \  \inlineBEG{0}{0}{0}{1.0} + \frac{n}{2} \left(  
\inlineBEG{0}{2}{2}{1.0} +  \inlineBEG{2}{0}{2}{1.0} +  \inlineBEG{2}{2}{0}{1.0} \right) 
+ \frac{n}{4}  \inlineBEG{2}{2}{2}{1.0} \right]$$

and, with $A^2\circ A_0=nA_0$ and $A^2 \circ A_2 = \frac{n}{2}A_2$, we apply Rule \textsf{sr1'} (parallel reduction) to arrive at 
\begin{eqnarray*}
&=& 2n^3(n+1) \left[ n^4 \inlineEG{0}{0}{0}{1.0} + \frac{n^4}{8} \left(  
\inlineEG{0}{2}{2}{1.0} +  \inlineEG{2}{0}{2}{1.0} +  \inlineEG{2}{2}{0}{1.0} \right) 
+ \frac{n^4}{32}  \inlineEG{2}{2}{2}{1.0} \right] \\
&=& 2n^3(n+1) \left[  n^4 \tr(I) +  \frac{n^4}{8} \cdot 3 \cdot \tr(A_2^2) +  \frac{n^4}{32} \tr(A_2^3) \right] \\
&=& n^{11} + 4n^{10} + 7n^9 + 4n^8.
\end{eqnarray*}
 \end{example}

\section{Discussion and open problems}
\label{Sec:problems}

In this paper, we have used association schemes as a place to search for graphs where subconfiguration counts are somewhat under control. We have relied on a theorem of Man\v{c}inska and Roberson that shows it is sufficient to count homomorphisms into $G$ from any planar graph $F$ and also on a theorem of Epifanov that gives a reduction procedure for any planar graph $F$ involving moves on just 1, 2, or 3 edges. The algebraic effect of single-edge and 2-edge reductions can be computed in any association scheme, but the moves involving three edges --- the Delta-Wye transformations --- seem only manageable in the case of exactly triply regular association schemes. Fortunately, the association scheme of any Hadamard graph has this property.

In Section~\ref{Subsec:Hadqc}, we use homomorphism counting to show two Hadamard graphs of the same order, $G$ and $H$, are quantum isomorphic.
The immediate question is to determine a quantum permutation matrix $\cP$ of order $d$, for some $d$,
satisfying $\cP(A_G\otimes I_d) = (A_H \otimes I_d)\cP$, which will give a perfect quantum strategy for the non-local isomorphism game.

Schmidt's example and Hadamard graphs are graphs in association schemes.  We are interested in the common properties shared by association schemes that contain quantum isomorphic graphs.  In particular, do they have to be both exactly triply regular with the same Delta-Wye parameters?  We ask for more pairs of exactly triply regular association schemes with the same Delta-Wye parameters, which will give more examples of quantum isomorphic graphs.    A related question is whether the Delta-Wye parameters of an exactly triply regular association scheme are determined by its intersection numbers.

In Section~6 of \cite{manrob4}, the authors mention their first example of quantum isomorphic graphs can be constructed using both their method as well as a version of the Cai, F\"{u}rer and Immerman construction.  The Cai, F\"{u}rer and Immerman construction is designed to produce non-isomorphic graphs that are indistinguishable by the Weisfeiler-Lehman algorithm \cite{cfi}.   
A natural question is whether two non-isomorphic Hadamard graphs of the same order are distinguishable by the $d$-dimensional Weisfeiler-Lehman algorithm, for some $d$.

The association schemes supporting spin models that give the Kauffman polynomial or the Hadamard spin models are formally self-dual and exactly triply regular,
\cite{jaegerSRG} and \cite{nomura}.   Does the Bose-Mesner algebra of a formally self-dual exactly triply regular association scheme always contain a spin model?
Conversely, is the Nomura algebra of a spin model exactly triply regular?  In \cite{jaeger}, Jaeger asked for examples of exactly triply regular association schemes that are not formally self-dual or a proof that such an association scheme cannot exist.

\section*{Acknowledgments}

  This work was supported, in part, through an  NSERC Discovery Grant RGPIN-2021-03609 (AC) and NSF DMS Award \#1808376 (WJM); this essential support is gratefully acknowledged. 

This work began at the  Centre de Recherches Math\'{e}matiques at the Universit\'{e} de Montr\'{e}al during their Workshop on Graph Theory, Algebraic Combinatorics and Mathematical Physics (July 25 -- August 19, 2022). The authors are grateful to the institute for hosting this meeting.   The first author gratefully acknowledges the support of the CRM-Simons program.   The second author gratefully acknowledges travel support provided by the US National Science Foundation (Award \#2212755).  

The authors thank Bill Kantor for helpful discussions about Hadamard graphs,
to Laura Man\v{c}inska and Dave Roberson for introducing the authors to the topic of quantum isomorphisms and answering our questions, and to Simon Schmidt for telling us about his work. P.\ K.\ Aravind provided comments to improve the introduction and Daniel Gromada identified mistakes in an earlier draft.




\begin{thebibliography}{XX}

\bibitem{manrob4}
   A.~Atserias, L.~Man\v{c}inska, D.E.~Roberson, R.~\v{S}\'{a}mal, S.~Severini, A.~Varvitsiotis.
   Quantum and non-signalling graph isomorphisms, \textit{J. Combin. Theory Ser. B} \textbf{136} (2019), 289--328.
   
\bibitem{banito} 
E.~Bannai and T.~Ito. 
\underline{Algebraic Combinatorics I: Association Schemes}. 
Ben\-ja\-min-Cummings, Menlo Park, 1984.

{
\bibitem{Bell}
J.~S.~Bell.
On the Einstein Podolsky Rosen paradox.
{\it Physics} {\bf 1 (3)} (1964), 195--200.
}

\bibitem{bcn}
A.~E.~Brouwer, A.~M.~Cohen and A.~Neumaier.
\underline{Distance-Regular Graphs}. Springer-Verlag, Berlin, 1989.

{
\bibitem{cfi}
J.~Cai, M.~F\"{u}rer and N.~Immerman. 
An optimal lower bound on the number of variables for graph identification, \textit{Combinatorica } \textbf{12 (4)} (1992), 389--410.
}

\bibitem{DRGsurvey}
E.~R.~van Dam, J.~H.~Koolen, and H.~Tanaka.
Distance-regular graphs,
\textit{Electronic J.~Combin.}  Dynamic Survey  DS22, 2016

\bibitem{del} 
P.~Delsarte. 
An algebraic approach to the association schemes of coding theory,  
{\it Philips Res.\ Reports Suppl.} {\bf 10} (1973).


\bibitem{diestel}
R.~Diestel,
\underline{Graph Theory}.
Springer-Verlag, Heidelberg, 1997 (Electronic Edition, 2005)

\bibitem{godsil}
C.~D.~Godsil.
\underline{Algebraic Combinatorics}.
Chapman and Hall, New York, 1993.

{
\bibitem{Gromada}
D.\ Gromada.
Quantum symmetries of Hadamard matrices. {\it Preprint.}
(arXiv:2210.02047 [math.QA])
}
{
\bibitem{jaegerSRG}
F.~Jaeger.
Strongly regular graphs and spin models for the Kauffman polynomial,
{\it Geom.\ Dedicata} {\bf 44} (1992), 23--52.
}


\bibitem{jaeger}
F.~Jaeger.
On spin models, triply regular association schemes, and duality,
{\it J.~Algebraic Combin.} {\bf 4} (1995), 103--144.

\bibitem{jones}
V.~F.~R.~Jones.
On knot invariants related to some statistical mechanical models,
\textit{Pacific J.\ Math.}  {\bf 137 (2)} (1989), 311--334.


\bibitem{lovasz67}
L.~Lov\'{a}sz.
Operations with structures, 
\textit{Acta Math.\ Hung.} \textbf{18} (1967), 321--328.

\bibitem{manrob1}
L.~Man\v{c}inska and  D.~E.~Roberson.
Quantum homomorphisms, 
\textit{J.\ Combin.\ Theory Ser.\ B} \textbf{118} (2016), 228--267. 

\bibitem{manrob2}
L.~Man\v{c}inska and  D.~E.~Roberson.
Quantum isomorphism is equivalent to equality of homomorphism counts from planar graphs, in:
{\sl 61st FOCS} 
 IEEE Computer Soc., Los Alamitos, CA (2020), 661--672.



\bibitem{manrob3}
L.~Man\v{c}inska, D.~E.~Roberson and A.\ Varvitsiotis.
Graph isomorphism: physical resources, optimization models, and algebraic characterizations, Preprint.
(arXiv:2004.10893v1)

\bibitem{mtsurvey}
W.~J.~Martin and H.~Tanaka.
Commutative association schemes,
\textit{Europ. J. Combin.} \textbf{30} (2009), 1497--1525. 


\bibitem{WJMscaff}
W.~J.~Martin.
Scaffolds: A graph-theoretic tool for tensor computations related to Bose-Mesner algebras,
\textit{Linear Algebra Appl.} \textbf{619} (2021), 50--106.

\bibitem{McKay1979}
B.~D.~McKay.
Hadamard equivalence via graph isomorphism,
\textit{Discrete Math.} \textbf{27} (1979), 213--214.

\bibitem{Merchant}
E.~Merchant.
Exponentially many Hadamard designs,
\textit{Des.\ Codes Cryptogr.} \textbf{38} (2006), 297--308.

\bibitem{nomura}
K~Nomura.  
Spin models constructed from Hadamard matrices,
\textit{J. Combin. Theory Ser. A}  \textbf{68}  (1994),  no. 2, 251--261.

\bibitem{schmidt}
S.~Schmidt.
Quantum isomorphic strongly regular graphs from the $E_8$ root system, \textit{Preprint}.


\bibitem{terPQ}
P.~Terwilliger.
A characterization of $P$-~and $Q$-polynomial association schemes,
{\it J.~Combin.~Theory, Ser.~A} {\bf 45} (1987), 8--26.


\bibitem{terwnotes2009}
P.~Terwilliger.
Course lecture notes, Math 846 Algebraic Graph Theory, Spring term 2009, University of Wisconsin
\url{http://www.math.wisc.edu/~terwilli/Htmlfiles/part2.pdf}

\end{thebibliography}
\end{document}